\title{\large{Uncertainty inequalities on groups and homogeneous spaces via isoperimetric inequalities}}
\author{Gian Maria Dall'Ara\and Dario Trevisan}
\address{Scuola Normale Superiore, Pisa, Italy}
\email{gianmaria.dallara@sns.it, dario.trevisan@sns.it}
\date{\today}
\newcommand{\R}{\mathbb{R}}
\newcommand{\N}{\mathbb{N}}
\newcommand{\C}{\mathbb{C}}
\newcommand{\be}{\begin{equation*}}
\newcommand{\ee}{\end{equation*}}
\newcommand{\bel}{\begin{equation}}
\newcommand{\eel}{\end{equation}}
\newcommand{\bee}{\begin{eqnarray*}}
\newcommand{\eee}{\end{eqnarray*}}
\newcommand{\eps}{\varepsilon}
\newcommand{\comma}{\textrm{,}}
\newcommand{\period}{\textrm{.}}
\newcommand{\bra}[1]{\left( #1 \right)}
\newcommand{\cur}[1]{\left\{ #1 \right\}}
\newcommand{\abs}[1]{\left| #1 \right|}
\newcommand{\nor}[1]{\left\| #1 \right\|}
\newcommand{\cnum}{\mathbb{C}}
\newtheorem{thm}{Theorem}
\newtheorem{lem}[thm]{Lemma}
\newtheorem{cor}[thm]{Corollary}
\begin{document}

\maketitle

\begin{abstract}
We prove a new family of $L^p$ uncertainty inequalities on fairly general groups and homogeneous spaces, both in the smooth and in the discrete setting. The novelty of our technique consists in the observation that the $L^1$ endpoint can be proved by means of appropriate isoperimetric inequalities.
\end{abstract}

\section{Introduction}

The aim of this article is to prove a family of uncertainty inequalities on fairly general groups and homogeneous spaces, both in the smooth and in the discrete setting, highlighting a connection with isoperimetric inequalities. The basic example of the kind of uncertainty inequalities we are interested in is the classical inequality \bel\label{heisen}
\nor{f}_{L^2(\R^N)}^2\leq 4N^{-2}\nor{\nabla f}_{L^2(\R^N)} \nor{\abs{x}f}_{L^2(\R^N)}\qquad\forall f\in C^\infty_c(\R^N),
\eel which first appeared (for $N=1$) in Appendix $1$ of Hermann Weyl's celebrated book ``The Theory of Groups and Quantum Mechanics''. We refer to the beautiful survey \cite{fo-si} for the higher-dimensional versions and for many other results related to the uncertainty principle.

In this introductory section, we give an informal description of our results, postponing more precise and slightly more general statements to Theorem \ref{HPW} in Section \ref{smooth-setting} (for the smooth setting) and to Theorem \ref{HPW-discrete} in Section \ref{discrete-setting} (for the discrete setting).
We work on $M$, a homogenous space for a (Lie or finitely generated) group $G$ such that the isotropy subgroups are compact, and endowed with:\begin{enumerate}[(i)]
\item an invariant measure $\mu$, with respect to which $L^p$ spaces are defined,
\item an invariant distance $d$,
\item an invariant gradient $\nabla$, naturally coupled with $d$.
\end{enumerate} In this setting, one can define the crucial growth function $\Gamma_M(r):=\mu(B(r))$, where $B(r)$ is any ball of radius $r$ with respect to the distance $d$. Our main result is that, for a wide class of spaces $M$ as above, the inequality\bel\label{model}
\nor{f}_{L^p(M)}^2\leq Cp\nor{\nabla f}_{L^p(M)} \nor{wf}_{L^p(M)}
\eel holds for sufficiently nice functions $f:M\rightarrow \R$ and any non-negative $w$ satisfying the growth condition \bel\label{growth-intro}
\mu(w\leq r)\leq \Gamma_M(r)\qquad \forall r.
\eel The constant $C$ is independent of $M$ and explicitly computable (by a careful reading of our arguments), albeit non optimal (see Section \ref{question} for more on this point). Notice that the distance from a fixed point, $d(\cdot, m_0)$, satisfies condition \eqref{growth-intro}, so that \eqref{heisen} is a particular instance of \eqref{model}, up to the optimal constant. To appreciate the greater generality provided by condition \eqref{growth-intro} see the fourth remark to Theorem \ref{HPW} below.

Let us stress that, to our knowledge, the $L^1$ estimates we consider were never studied before and they are new also when $M=\R^N$. 

\subsection{Examples} We list a few examples of groups and homogeneous spaces to which our results apply.
\begin{enumerate}[(i)]
\item Any compact or non-compact Riemannian symmetric space endowed with the invariant measure, distance and gradient.
\item Any unimodular connected Lie group, endowed with a system of left invariant vector fields $X_1,\dots,X_k$ generating its Lie algebra. The gradient is $\nabla :=(X_1,\dots,X_k)$ and the distance is the control metric associated to these vector fields (see the book \cite{va-sa-cou}).
\item The unit sphere $S^{2N-1}$ in $\C^N\equiv\R^{2N}$, endowed with the natural action of $U(N)$ and the $U(N)$-invariant sub-Riemannian structure in which the horizontal bundle is given by the complex tangent directions (see \cite{invitation}). The gradient and the distance are those naturally attached to this structure (see Section \ref{smooth-setting}).
\item Cayley graphs of any finitely generated group, endowed with the word metric and the graph gradient (as defined e.g.\ in Section $1$ of \cite{coulhon-saloffcoste}).
\item More generally, any Schreier coset graph, as described in Section \ref{assumptions-discrete}. 
\end{enumerate}

\subsection{Comments on the proof technique}
The basic observation, which motivates the title of the paper, is that the $L^1$ case of \eqref{model}, to which the $L^p$ case can be reduced, is related to a weak isoperimetric inequality on the space $M$. In fact, well-established techniques in geometric measure theory allow to define a notion of perimeter for subsets of $M$, which is naturally associated to the gradient above. The weak isoperimetric inequality then reads as follows: \emph{a set $E\subseteq M$, having the same measure as a ball of radius $r$, has necessarily perimeter greater than or equal to $C\frac{\Gamma_M(r)}{r}$}, where $C$ is an explicitly computable constant independent of $M$. We are able to show that this isoperimetric inequality implies the main estimate \eqref{model} for $p=1$. The weak isoperimetric inequality itself follows from an established circle of ideas, a brief description of which can be found e.g.\ in Section $6.43$ of \cite{gro-green}. For the sake of completeness, we dedicate the Appendix to a short proof of it along these lines. Notice that isoperimetric results close to the one above appear in \cite{coulhon-saloffcoste}, where the authors deal with many settings partially intersecting ours.

\subsection{Comparison with the existing literature} There has recently been some work in the direction of establishing in very general settings uncertainty inequalities of the kind we are interested in. 

First we mention the works of F.\ Ricci \cite{ricci-polynomial}, P.\ Ciatti, F.\ Ricci and M.\ Sundari \cite{cia-ri-sun2}, and A.\ Martini \cite{martini}, which prove very general $L^2$ uncertainty inequalities, in which the gradient may be replaced by a positive power of a non-negative operator satisfying certain assumptions. Their approach is spectral-theoretic and relies on heat kernel techniques, which does not seem to yield our $L^2$ inequalities, at least when the volume growth function grows faster than any polynomial at infinity.

We also mention the work of A.\ Okoudjou, L.\ Saloff-Coste and A.\ Teplyaev \cite{sa-ok-tep}, where they prove in particular $L^2$ uncertainty inequalities for general  non-compact unimodular sub-Riemannian Lie groups and finitely generated groups (Section $3.5$ of \cite{sa-ok-tep}). Their approach is based on Poincar\'e-type and Nash-type inequalities, and their results are confined to $w=d(\cdot,m_0)$.

Concerning $L^p$ uncertainty inequalities for $p\neq2$, we point out the recent paper \cite{hardy-unc-strat} of P.\ Ciatti, M.\ Cowling and F.\ Ricci. Theorem C of that paper, combined with known facts about Riesz transforms, allows in particular to derive our Theorem \ref{HPW} when $M$ is a stratified group, $p>1$ and $w$ is the homogeneous norm.


\subsection{Plan of the paper} Sections from \ref{smooth-setting} to \ref{compact-section} are dedicated to (slight generalizations of) inequality \eqref{model} in the smooth setting, while sections \ref{discrete-setting} and \ref{proof-discrete} describe our discrete setting and illustrate the modifications needed to extend our arguments to cover the discrete case. Finally, Section \ref{question} comments on some quantitative aspects of our estimates.

We conclude with a few comments on our notation. Any $C$ appearing in an estimate stands for an absolute constant which is independent of everything, in particular of the group or homogeneous manifold one is working with. Moreover we denote by $\dot{\gamma}(t)$ the time derivative of a curve, by $f^\leftarrow(A)$ the inverse image of the set $A$ with respect to the function $f$, and by $1_A$ the characteristic function of $A$.

\section{Smooth setting}\label{smooth-setting} 

We recall that a \emph{sub-Riemannian structure} on a connected smooth manifold $M$ is a pair $(\mathcal{V},g)$, where $\mathcal{V}$ is a completely non-integrable distribution on $M$ and $g$ is a smooth fiber metric on $\mathcal{V}$. Here we adopt the differential geometric terminology and by a distribution we mean a constant rank sub-bundle of the tangent bundle of $M$. The complete non-integrability means that the tangent bundle $TM$ is generated by iterated commutators of smooth sections of $\mathcal{V}$. In the usual jargon, $\mathcal{V}$ is called the \emph{horizontal distribution} of the sub-Riemannian manifold and smooth sections of $\mathcal{V}$ are called \emph{horizontal vector fields}. Finally, for every $m\in M$, $g_m$ is a scalar product on the fiber $\mathcal{V}_m$ depending smoothly on $m$. Of course when $\mathcal{V}=TM$, $(M,g)$ is a Riemannian manifold.

Several geometric and analytic objects are naturally attached to a sub-Riemannian manifold $(M,\mathcal{V},g)$: we will be mainly dealing with the \emph{Carnot-Carath\'eodory distance} and the \emph{horizontal gradient}.\newline One can define the Carnot-Carath\'eodory distance as\be
d_{CC}(m,n):=\inf\text{ length}(\gamma)\qquad (m,n\in M),
\ee where the $\inf$ is taken as $\gamma$ varies over the piecewise $C^1$ curves $\gamma:[0,T]\rightarrow M$ that connect $m$ to $n$ and are horizontal, i.e.\ \be
\dot{\gamma}(t)\in \mathcal{V}_{\gamma(t)}\setminus\{0\}
\ee for every $t\in [0,T]$ (at the non-differentiability points, one requires that both the left and the right derivatives satisfy the property). The length is computed in terms of $g$:\be
\text{length}(\gamma):=\int_0^T\sqrt{g_{\gamma(t)}(\dot{\gamma}(t),\dot{\gamma}(t))}dt.
\ee 
It is well known that $d_{CC}$ is a distance inducing the manifold topology on $M$ (Theorem $2.1.2$ and Theorem $2.1.3$ of \cite{montgomery})  and we will denote by $B(m,r)$ the open ball of center $m$ and radius $r$ with respect to this distance. We denote $d_0$ the diameter of the metric space $(M,d_{CC})$.

If $f:M\rightarrow \R$ is regular ($C^1$ is enough), one can define the \emph{horizontal gradient} $\nabla_Hf$ as the unique horizontal vector field satisfying the identity\be
g(\nabla_Hf,X)=Xf\qquad \text{ for every horizontal vector field } X.
\ee 

Assume now that $M$ carries a smooth left action of a Lie group $G$. To fix the notation, we say that an element $x\in G$ acts on $m\in M$ sending it to $x\cdot m$ and we denote by $\phi_x$ the diffeomorphism $m\mapsto x\cdot m$. We say that the sub-Riemannian structure $(\mathcal{V},g)$ is \emph{$G$-invariant} if the differential $d(\phi_x):TM\rightarrow TM$ induces an isometry of the sub-bundle $\mathcal{V}$, i.e.\ if the restriction \be
d(\phi_x)_{|m}:(\mathcal{V}_m,g_m)\longrightarrow (\mathcal{V}_{x\cdot m},g_{x\cdot m})
\ee
is a well-defined isometry for every $x\in G$ and $m\in M$.\newline If the action of $G$ is transitive, we call $M$ a \emph{sub-Riemannian homogeneous manifold for $G$}. One may easily check that in this case the Carnot-Carath\'eodory distance is $G$-invariant, i.e.\ \be
d_{CC}(x\cdot m,x\cdot n)=d_{CC}(m,n)\qquad\forall x\in G,\ m,n\in M,
\ee and that the horizontal gradient is also $G$-invariant:\be
\nabla_H(f\circ\phi_x)=(\nabla_Hf)\circ\phi_x\qquad\forall f\in C^1(M),\ x\in G.
\ee

\subsection{Standing assumptions and statement of the main result in the smooth setting}\label{statement-smooth}

We are now in a position to state our \emph{standing assumptions} in the smooth setting:\begin{enumerate}[(i)]
\item $M$ is a connected smooth manifold,
\item $G$ is a connected and unimodular Lie group,
\item $G$ acts smoothly and transitively on $M$,
\item the isotropy subgroup of some, and hence every, point of $M$ is compact,
\item $(\mathcal{V},g)$ is a $G$-invariant sub-Riemannian structure on $M$.
\end{enumerate}
Under these assumptions there always exists a $G$-invariant Borel measure which is finite on compact sets and unique up to positive multiples (see the Appendix for more details). Choosing such a measure $\mu$ allows to define $G$-invariant $L^p$ spaces on $M$ and the crucial \emph{volume growth function}\be
\Gamma_M(r):=\mu(B(m,r))\qquad\forall r\geq 0,
\ee which is well-defined since the right hand side is independent of $m$, thanks to the invariance of both the measure and the distance $d_{CC}$, and the transitivity of the action.

We are finally in a position to state our main result in the smooth setting.

\begin{thm}\label{HPW}
Assume that $M$ satisfies the assumptions above and that $w:M\rightarrow[0,+\infty]$ is a Borel function such that\bel\label{hyphpw}
\mu\{m\in M:\ w(m)\leq r\}\leq \Gamma_M(r)\qquad\forall r\geq0.
\eel If $M$ is not compact, then the inequality\bel\label{hpw}
\nor{f}_{L^p(M)}\leq Cp^\frac{\alpha}{\alpha+1}\nor{\nabla_H f}_{L^p(M)}^\frac{\alpha}{\alpha+1}\nor{w^\alpha f}_{L^p(M)}^\frac{1}{\alpha+1}
\eel holds for every $f\in C_c^\infty(M)$ and $\alpha>0$.\newline 
If $M$ is compact, inequality \eqref{hpw} holds for every $f$ that satisfies the additional assumption\bel\label{zero-mean}
\int_Mfd\mu=0.
\eel
\end{thm}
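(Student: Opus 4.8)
The plan is to reduce everything to the endpoint case $p=1$, $\alpha=1$, which is the content of the weak isoperimetric inequality announced in the introduction, and then bootstrap to general $p$ and $\alpha$ by elementary interpolation-type arguments on the level of the function $|f|$. First I would treat the $L^1$ endpoint with $\alpha=1$. The key tool is the coarea formula: writing $E_t=\{|f|>t\}$, one has $\nor{\nabla_H f}_{L^1}=\int_0^\infty P(E_t)\,dt$, where $P(E_t)$ is the horizontal perimeter of $E_t$. The weak isoperimetric inequality gives $P(E_t)\geq C\,\Gamma_M(r_t)/r_t$, where $r_t$ is chosen so that $\mu(E_t)=\Gamma_M(r_t)$ (using that $\Gamma_M$ is continuous, increasing, and surjective onto $[0,\mu(M))$ in the non-compact case). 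On the other side, the growth hypothesis \eqref{hyphpw} forces $\{w\leq r_t\}$ to have measure at most $\mu(E_t)$, so on a subset of $E_t$ of measure at least a fixed fraction of $\mu(E_t)$ one has $w\geq r_t$ (more precisely $\mu(E_t\cap\{w> r_t\})\geq$ something comparable to $\mu(E_t)$, or one argues with $\mu(E_t\setminus\{w\leq r_t\})\geq \mu(E_t)-\Gamma_M(r_t)$ replaced by a Chebyshev-type bound); this yields $\nor{wf}_{L^1}\gtrsim \int_0^\infty r_t\,\mu(E_t)\,dt$. Multiplying the two lower bounds and using $\mu(E_t)=\Gamma_M(r_t)$ gives, after integrating the layer-cake decomposition $\nor{f}_{L^1}=\int_0^\infty\mu(E_t)\,dt$ and a Cauchy--Schwarz in the $t$ variable, the desired $\nor{f}_{L^1}^2\lesssim \nor{\nabla_H f}_{L^1}\nor{wf}_{L^1}$.

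Next I would pass from $\alpha=1$ to general $\alpha>0$ while still at $p=1$. The trick is to apply the $\alpha=1$ inequality not to $f$ but to an auxiliary function, or equivalently to redistribute the weight: one splits $M=\{w\leq s\}\cup\{w>s\}$ for a free parameter $s$, bounds $\int_{\{w\leq s\}}|f|$ by the isoperimetric inequality with the effective weight $\min(w,s)$ (which still satisfies a growth condition of the required type up to constants), and bounds $\int_{\{w>s\}}|f|\leq s^{-\alpha}\nor{w^\alpha f}_{L^1}$, then optimizes over $s$. This is the standard device that turns a linear-weight uncertainty inequality into a power-weight one and produces the exponents $\frac{\alpha}{\alpha+1}$ and $\frac{1}{\alpha+1}$ together with the factor $p^{\alpha/(\alpha+1)}$ once $p$ enters.

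For general $p\in[1,\infty)$ I would apply the $p=1$ result to $g:=|f|^p$. One has $\nor{g}_{L^1}=\nor{f}_{L^p}^p$, $\nor{w^\alpha g}_{L^1}=\nor{w^{\alpha/p}f}_{L^p}^p$, and by the chain rule $|\nabla_H g|\leq p\,|f|^{p-1}|\nabla_H f|$ pointwise, so $\nor{\nabla_H g}_{L^1}\leq p\nor{|f|^{p-1}|\nabla_H f|}_{L^1}\leq p\nor{f}_{L^p}^{p-1}\nor{\nabla_H f}_{L^p}$ by H\"older. Feeding these into the $p=1$, $\alpha$ inequality, cancelling a power of $\nor{f}_{L^p}$, and replacing $\alpha$ by $p\alpha$ to match the statement, yields \eqref{hpw} with the stated constant $Cp^{\alpha/(\alpha+1)}$. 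In the compact case the only change is that $\Gamma_M$ is no longer surjective onto $[0,\infty)$, so one cannot always solve $\mu(E_t)=\Gamma_M(r_t)$; this is where hypothesis \eqref{zero-mean} enters, since for mean-zero $f$ both $\{f>t\}$ and $\{f<-t\}$ (or more simply $\{|f|>t\}$ together with its complement) have measure at most $\mu(M)/2=\Gamma_M(d_0/2)$ in the relevant range — actually one should run the level-set argument separately on the positive and negative parts, each of which occupies at most half the space, keeping all sets within the range of $\Gamma_M$.

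The main obstacle I expect is the bookkeeping at the endpoint: making the coarea/layer-cake manipulation rigorous (justifying that $E_t$ has finite perimeter for a.e.\ $t$, that $\Gamma_M$ has the regularity needed to define $r_t$, and that the Cauchy--Schwarz step is tight enough to give a clean universal constant) and, relatedly, checking that the truncated weight $\min(w,s)$ still verifies \eqref{hyphpw} up to an absolute constant so that the $\alpha>1$ reduction goes through. The compact case requires care precisely because the naive choice of $r_t$ can exceed the diameter; handling positive and negative parts separately fixes this but must be stated cleanly.
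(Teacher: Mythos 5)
Your $L^1$, $\alpha=1$ endpoint is a genuinely different route from the paper's (layer-cake plus Cauchy--Schwarz in the level parameter $t$, versus the paper's reduction to the additive inequality \eqref{additiveHPW} and the localized gradient estimate of Lemma \ref{Faris}), and it can be made to work, but two steps are broken as written. First, the Chebyshev step is vacuous: if $\mu(E_t)=\Gamma_M(r_t)$, then \eqref{hyphpw} only gives $\mu(E_t\setminus\{w\le r_t\})\ge \mu(E_t)-\Gamma_M(r_t)=0$. You must shrink the radius, e.g.\ invoke the super-additivity $\Gamma_M(2s)\ge 2\Gamma_M(s)$ (two disjoint balls along a minimizing geodesic inside a bigger ball) to get $\mu(\{w\le r_t/2\})\le \mu(E_t)/2$, hence $w>r_t/2$ on at least half of $E_t$. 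Second, your passage to general $\alpha$ via the truncated weight $\min(w,s)$ fails outright in the non-compact case: $\{\min(w,s)\le s\}=M$ has infinite measure, so no constant makes it satisfy \eqref{hyphpw}. The clean fix inside your own framework is to replace Cauchy--Schwarz by H\"older with exponents $\frac{\alpha+1}{\alpha}$ and $\alpha+1$ in the $t$-integral, writing $\mu(E_t)=\bigl(\mu(E_t)/r_t\bigr)^{\alpha/(\alpha+1)}\bigl(r_t^{\alpha}\mu(E_t)\bigr)^{1/(\alpha+1)}$, which handles every $\alpha$ at once; alternatively, prove the localized estimate $\int_E|f|\,d\mu\le Cr\int_M|\nabla_Hf|\,d\mu$ for \emph{every} $E$ with $\mu(E)\le\Gamma_M(r)$ (this needs the decoupled isoperimetric bound $\mu(A\cap E)\le Cr\nor{\partial_HA}$, not just the one for $A=E$) and then split over $\{w\le r\}$ and $\{w>r\}$ as the paper does. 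Your $L^p$ reduction via $g=|f|^p$ and $\alpha'=p\alpha$ matches the paper and is fine in the non-compact case; the reliance on continuity and surjectivity of $\Gamma_M$ to define $r_t$ is a genuine technical debt but a minor one.

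The compact case is where the serious gap lies. Zero mean does \emph{not} imply $\mu(\{f>t\})\le 1/2$ for $t>0$ (take $f$ equal to a value slightly below $1$ on $90\%$ of $M$ and very negative on the rest), nor does it bound $\mu(\{|f|>t\})$ or $\mu(\{f>0\})$ by $1/2$; so "running the argument on the positive and negative parts" does not keep the level sets in the range where the isoperimetric inequality applies. The missing device is the \emph{median} $m_0$ of $f$: the sets $\{f-m_0>s\}$ for $s>0$ and $\{f-m_0<-s\}$ for $s>0$ do have measure $\le 1/2$, one proves $\int_E|f-m_0|\,d\mu\le Cr\int_M|\nabla_Hf|\,d\mu$, and the resulting error term $|m_0|\,\mu(\{w\le r\})\le |m_0|\,\Gamma_M(r)$ must then be absorbed into the left-hand side using $|m_0|\le 2\int_M|f|\,d\mu$ and $\Gamma_M(d_0/8)\le 1/4$ (again super-additivity, valid for radii $\le d_0/2$), with a Poincar\'e inequality covering $r\ge d_0/8$. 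Moreover your $L^p$ step collapses here because $|f|^p$ is not mean-zero; one has to work with $f|f|^{p-1}-m_0|m_0|^{p-1}$ and absorb a further median term. None of this machinery appears in your proposal, so the compact half of the theorem is not established.
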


We would like to highlight some features of our result:\begin{enumerate}[(i)]

\item The constant in \eqref{hpw} is independent of the space $M$.
\item Both the hypothesis and the conclusion are $G$-invariant (e.g.\ $w$ satisfies the hypothesis \eqref{hyphpw}  if and only if $w\circ\phi_x$ satisfies the same hypothesis, where $x\in G$) and they do not depend on the choice of the $G$-invariant measure $\mu$.
\item A very natural example of a function $w$ satisfying the assumptions of Theorem \ref{HPW} is the Carnot-Carath\'eodory distance to a fixed point, i.e.\ $d_{CC}(m_0,\cdot)$ ($m_0\in M$).
With this choice of $w$ and for $p=2$ and $\alpha=1$, inequality \eqref{hpw} is the most natural generalization of \eqref{heisen} to the sub-Riemannian homogeneous setting.
\item Fix $m_1,\dots,m_k\in M$ and consider \be
w(m):=k\min_{j=1,\dots,k}d_{CC}(m_j,m)\qquad(m\in M).
\ee From the super-additivity of $\Gamma_M$ discussed in Section \ref{tech} it follows that $w$ satisfies \eqref{hyphpw} (at least in the non-compact case, in the compact case one needs to replace $k$ with $Ck$ for some absolute constant $C$). Inequality \eqref{hpw} shows that there is a limit to the extent to which a normalized low-energy $L^2$ function on $M$ ($\nor{f}_{L^2(M)}=1$ and $\nor{\nabla_Hf}_{L^2(M)}$ small) can be localized around a finite configuration of points. Notice that this cannot be deduced from the inequality with $w=d_{CC}(m_0,\cdot)$ even in the Euclidean setting.
\item In case $M$ is compact or, equivalently, $(M,d_{CC})$ has finite diameter, the hypothesis \eqref{hyphpw} is empty for $r$ greater than or equal to the diameter. The additional restriction $\int_Mfd\mu=0$ is of course necessary, since constant functions are in $C^\infty_c(M)$. 

\end{enumerate}

\subsection{Structure of the proof}\label{structure-proof}

The proof of Theorem \ref{HPW} occupies sections from \ref{uncertainty-isoperimetric} to \ref{compact-section} of the paper. For the sake of clarity, in sections \ref{uncertainty-isoperimetric}, \ref{isoperimetric-section} and \ref{Lp} we assume that $M$ is non-compact, since this simplifies several aspects of the proof. We then devote Section \ref{compact-section} to the technical modifications needed to deal with the compact case.\newline
The proof in the non-compact case consists of three main steps.
\begin{enumerate}[(a)]

\item In Section \ref{uncertainty-isoperimetric} we start with the $L^1$ inequality and we show how it can be reduced to a basic gradient estimate (Lemma \ref{Faris}) for functions on $M$.

\item Then, in Section \ref{isoperimetric-section}, we deduce the gradient estimate from the weak isoperimetric inequality quoted in the introduction. The main tools are the coarea formula and a super-additivity property for $\Gamma_M$.

\item Finally, in Section \ref{Lp}, the deduction of the $L^p$ estimates from the $L^1$ estimate is a standard trick exploting Leibniz rule and H\"older inequality. The revision of this step in the compact case is a bit painful, due to the restriction \eqref{zero-mean}.

\end{enumerate}

\section{Reduction of the $L^1$ inequality to a gradient estimate}\label{uncertainty-isoperimetric}

As anticipated in Section \ref{structure-proof}, we assume that $M$ satisfies the assumptions of Section \ref{statement-smooth} and is non-compact. We will not remove this restriction until Section \ref{compact-section}, and in sections \ref{uncertainty-isoperimetric} and \ref{isoperimetric-section} we will only deal with the $L^1$ inequality.

For the sake of clarity, we state the $L^1$ inequality of Theorem \ref{HPW} in the non-compact setting.

\begin{thm}\label{HPWL1}
Assume that $M$ is non-compact and satisfies the assumptions of Section \ref{statement-smooth}, and that $w:M\rightarrow[0,+\infty]$ is a Borel function such that\be
\mu\{m\in M:\ w(m)\leq r\}\leq \Gamma_M(r)\qquad\forall r\geq0.
\ee Then the inequality\be
\nor{f}_{L^1(M)}\leq C\nor{\nabla_H f}_{L^1(M)}^\frac{\alpha}{\alpha+1}\nor{w^\alpha f}_{L^1(M)}^\frac{1}{\alpha+1}
\ee holds for every $f\in C_c^\infty(M)$ and $\alpha>0$.
\end{thm}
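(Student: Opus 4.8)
The plan is to reduce Theorem~\ref{HPWL1} to the elementary gradient estimate of Lemma~\ref{Faris} below: there is an absolute constant $C$ such that for every $f\in C_c^\infty(M)$, every $R>0$, and every Borel set $E\subseteq M$ with $\mu(E)\leq\Gamma_M(R)$,
\be
\int_E\abs{f}\,d\mu\leq CR\,\nor{\nabla_H f}_{L^1(M)}.
\ee
Granting this, I would argue by a splitting in the spirit of Faris. Fix $R>0$ and write $\nor{f}_{L^1(M)}=\int_{\cur{w\leq R}}\abs{f}\,d\mu+\int_{\cur{w>R}}\abs{f}\,d\mu$. On $\cur{w>R}$ one has $\abs{f}\leq R^{-\alpha}w^\alpha\abs{f}$, so the second integral is at most $R^{-\alpha}\nor{w^\alpha f}_{L^1(M)}$; the first integral is at most $CR\,\nor{\nabla_H f}_{L^1(M)}$ by the gradient estimate applied to $E=\cur{w\leq R}$, whose measure is $\leq\Gamma_M(R)$ precisely by hypothesis~\eqref{hyphpw}. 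Hence $\nor{f}_{L^1(M)}\leq CR\,\nor{\nabla_H f}_{L^1(M)}+R^{-\alpha}\nor{w^\alpha f}_{L^1(M)}$ for all $R>0$, and choosing $R$ of size $\bra{\nor{w^\alpha f}_{L^1(M)}/\nor{\nabla_H f}_{L^1(M)}}^{1/(\alpha+1)}$ --- which is legitimate since $(M,d_{CC})$ has infinite diameter --- gives the claimed inequality, with an $\alpha$-dependent prefactor that one checks remains bounded by an absolute constant for all $\alpha>0$. (When $\nor{\nabla_H f}_{L^1(M)}=0$ or $\nor{w^\alpha f}_{L^1(M)}=0$ one has $f\equiv 0$ and there is nothing to prove.)

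To establish the gradient estimate I would combine the coarea formula with the weak isoperimetric inequality quoted in the introduction. Put $F_t:=\cur{\abs{f}>t}$. Cavalieri's principle yields $\int_E\abs{f}\,d\mu=\int_0^\infty\mu(E\cap F_t)\,dt$, while the coarea formula, applied to $\abs{f}$ and using $\abs{\nabla_H\abs{f}}=\abs{\nabla_H f}$ a.e., yields $\nor{\nabla_H f}_{L^1(M)}=\int_0^\infty P(F_t)\,dt$, where $P(\cdot)$ denotes the horizontal perimeter associated with $\nabla_H$. It therefore suffices to prove the pointwise-in-$t$ bound $\mu(E\cap F_t)\leq CR\,P(F_t)$. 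Fixing $t$ and assuming $\mu(F_t)>0$ (otherwise the bound is trivial), let $\rho_t>0$ satisfy $\Gamma_M(\rho_t)=\mu(F_t)$; this is possible because $\mu(F_t)<\infty$ ($f$ has compact support) and $\Gamma_M$ increases to $+\infty$ ($M$ is non-compact), provided $\Gamma_M$ is continuous (e.g.\ if spheres have measure zero), the general case being handled with a generalized inverse. The weak isoperimetric inequality gives $P(F_t)\geq c\,\Gamma_M(\rho_t)/\rho_t$ with $c$ absolute. If $\mu(F_t)\geq\Gamma_M(R)$ then $\rho_t\geq R$, and combining $\mu(E)\leq\Gamma_M(R)$ with the super-additivity of $\Gamma_M$ from Section~\ref{tech} --- which makes $r\mapsto\Gamma_M(r)/r$ non-decreasing up to an absolute factor --- we get
\be
\mu(E\cap F_t)\leq\mu(E)\leq\Gamma_M(R)=R\cdot\frac{\Gamma_M(R)}{R}\leq CR\cdot\frac{\Gamma_M(\rho_t)}{\rho_t}\leq CR\,P(F_t).
\ee
If instead $\mu(F_t)<\Gamma_M(R)$ then $\rho_t\leq R$ and simply $\mu(E\cap F_t)\leq\mu(F_t)=\Gamma_M(\rho_t)=\rho_t\cdot\frac{\Gamma_M(\rho_t)}{\rho_t}\leq R\cdot\frac{\Gamma_M(\rho_t)}{\rho_t}\leq CR\,P(F_t)$. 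Integrating in $t$ and invoking the coarea identity closes the argument.

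The essential geometric input, and the step I expect to require real work, is the weak isoperimetric inequality itself --- that a set with the measure of a ball of radius $r$ has horizontal perimeter at least $C\,\Gamma_M(r)/r$ --- which is where the homogeneity of $M$ is genuinely used; following the classical circle of ideas recalled in the introduction, I would relegate its proof to the Appendix. The super-additivity of $\Gamma_M$, which underlies the quasi-monotonicity of $\Gamma_M(r)/r$ used above, can be proved by packing pairwise disjoint balls along a geodesic ray issuing from a point (such a ray exists because $M$ is non-compact and complete), and I would record it in Section~\ref{tech}. The remaining ingredients --- the coarea formula and the rudiments of $BV$ theory adapted to the horizontal gradient, together with the measure-zero-sphere technicalities --- are by now standard, so that once they are in place the reduction above goes through essentially as written.
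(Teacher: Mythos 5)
Your proposal is correct and follows essentially the same route as the paper: reduce to the additive inequality by splitting over $\{w\leq R\}$ and $\{w>R\}$ and optimizing in $R$, establish the gradient estimate of Lemma \ref{Faris} via the coarea formula together with the weak isoperimetric inequality, and control the level sets at the wrong scale by the super-additivity of $\Gamma_M$. The only (cosmetic) difference is in the last step: you derive the bound $\mu(E\cap F_t)\leq CR\,P(F_t)$ from the quasi-monotonicity of $r\mapsto\Gamma_M(r)/r$, whereas the paper packages the same super-additivity input into its ``decoupled'' isoperimetric inequality (Theorem \ref{weak-isoperimetric-decoupled}) via a comparison of the radii $r_{A\cap E}$ and $r_A$.
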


The first observation to be made is that the estimate in Theorem \ref{HPWL1} is multiplicative in nature, but can nevertheless be reduced to an additive inequality, by the elementary identity\be
\min_{r>0}\left(ar+br^{-\alpha}\right)=\left(\alpha^\frac{1}{\alpha+1}+\alpha^{-\frac{\alpha}{\alpha+1}}\right)a^\frac{\alpha}{\alpha+1}b^\frac{1}{\alpha+1},
\ee which holds for every $a,b,\alpha>0$. Notice that the constant depends mildly on $\alpha$, since \be
\alpha^\frac{1}{\alpha+1}+\alpha^{-\frac{\alpha}{\alpha+1}}=e^{\frac{\log(\alpha)}{\alpha+1}}+e^{\frac{\log(1/\alpha)}{1/\alpha+1}}\leq 2e.
\ee
Hence Theorem \ref{HPWL1} follows from the existence of a universal constant $C$ such that the additive inequality\bel\label{additiveHPW}
\nor{f}_{L^1(M)}\leq Cr\nor{\nabla_H f}_{L^1(M)}+Cr^{-\alpha}\nor{w^\alpha f}_{L^1(M)}
\eel holds for every $r>0$. Our task is then to prove estimate \eqref{additiveHPW}. Since \bel\label{trivial-estimate}
\nor{f}_{L^1(M)}=\int_{w\leq r}|f|d\mu+\int_{w> r}|f|d\mu\leq \int_{w\leq r}|f|d\mu+r^{-\alpha}\int_M w^\alpha |f|d\mu,
\eel what we will establish is the following gradient estimate.

\begin{lem}\label{Faris}
If $E\subseteq M$ is Borel and $\mu(E)\leq \Gamma_M(r)$, then the inequality\bel\label{faris-estimate}
\int_E|f|d\mu\leq Cr\int_M|\nabla_Hf|d\mu
\eel holds for every $f\in C^\infty_c(M)$. 
\end{lem}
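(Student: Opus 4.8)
The strategy is to reduce \eqref{faris-estimate} to the \emph{weak isoperimetric inequality} mentioned in the introduction: any Borel set $E\subseteq M$ with $\mu(E)\leq \Gamma_M(r)$ satisfies $P(E)\geq C^{-1}\Gamma_M(r)/r$, where $P(E)$ denotes the horizontal perimeter naturally associated with $\nabla_H$. The bridge between an integral estimate against $|f|$ and a purely set-theoretic isoperimetric statement is the \emph{coarea formula}: for $f\in C^\infty_c(M)$ nonnegative (we may replace $f$ by $|f|$, approximating if one wants to be careful about smoothness, or simply work with $|f|$ which is Lipschitz and for which the coarea formula still holds),
\be
\int_M |\nabla_H f|\, d\mu = \int_0^\infty P(\{f>t\})\, dt ,
\ee
and likewise, by the layer-cake representation,
\be
\int_E |f|\, d\mu = \int_0^\infty \mu(E\cap \{f>t\})\, dt .
\ee

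\textbf{Main step.} So it suffices to bound $\mu(E\cap\{f>t\})$ by $Cr\, P(\{f>t\})$ for each level $t$, and then integrate in $t$. Fix a superlevel set $F=\{f>t\}$, which has finite measure since $f$ has compact support. Two cases: if $\mu(F)\leq \mu(E)\leq \Gamma_M(r)$, then $\mu(E\cap F)\leq \mu(F)\leq Cr\, P(F)$ directly from the weak isoperimetric inequality applied to $F$ (here $\mu(F)\leq\Gamma_M(r)$ means $F$ has measure at most that of a ball of radius $r$, so $P(F)\geq C^{-1}\Gamma_M(r_F)/r_F$ where $\Gamma_M(r_F)=\mu(F)$, and since $r_F\leq r$ and $r\mapsto \Gamma_M(r)/r$ is essentially monotone — or, more robustly, since $\Gamma_M(r_F)/r_F\geq \mu(F)/r$ would need $r_F\leq r$ — one concludes $\mu(F)\leq Cr\,P(F)$). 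If instead $\mu(F)>\mu(E)$, then $\mu(E\cap F)\leq \mu(E)\leq \Gamma_M(r)$, and the weak isoperimetric inequality applied to $F$ still gives $P(F)\geq C^{-1}\Gamma_M(\min(r,r_F))/\min(r,r_F) \geq C^{-1}\Gamma_M(r)/r \geq C^{-1}\mu(E\cap F)/r$, using that $\Gamma_M(\rho)/\rho$ is bounded below on $(0,r]$ by its value at $r$ (this monotonicity of $\Gamma_M(\rho)/\rho$, or a suitable substitute, is exactly the content of the super-additivity property of $\Gamma_M$ promised in Section \ref{tech}: $\Gamma_M(\rho_1+\rho_2)\geq \Gamma_M(\rho_1)+\Gamma_M(\rho_2)$ implies $\Gamma_M$ is superadditive, hence $\Gamma_M(\rho)/\rho$ is nondecreasing). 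In both cases $\mu(E\cap F)\leq Cr\, P(F)$, and integrating over $t\in(0,\infty)$ finishes the proof.

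\textbf{Expected obstacle.} The routine analytic parts (layer-cake, coarea, replacing $f$ by $|f|$) are standard; the genuine content is the weak isoperimetric inequality $P(E)\geq C^{-1}\Gamma_M(r)/r$ for $\mu(E)=\Gamma_M(r)$, which the paper defers to the Appendix and which I would invoke as a black box here. The one point requiring care in the present lemma is the comparison of $\Gamma_M$ at the radius $r$ fixed in the hypothesis with the (generally different, possibly larger) radius $r_F$ naturally attached to a superlevel set $F$; this is handled cleanly once one knows $\rho\mapsto\Gamma_M(\rho)/\rho$ is nondecreasing, which follows from the superadditivity of $\Gamma_M$. I would therefore state and use that monotonicity explicitly, deferring its proof to the section on technical properties of $\Gamma_M$.
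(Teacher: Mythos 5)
Your overall strategy is exactly the paper's: layer-cake plus the coarea formula reduce \eqref{faris-estimate} to the ``decoupled'' estimate $\mu(E\cap\{|f|>t\})\leq Cr\,\|\partial_H\{|f|>t\}\|$, which is the content of Theorem \ref{weak-isoperimetric-decoupled}, and this in turn is deduced from the weak isoperimetric inequality (taken as a black box) together with a comparison of $\Gamma_M$ at the two relevant radii $r$ and $r_F$. So the architecture is sound and matches the paper.

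The one genuine flaw is the justification of the key comparison step. You assert that super-additivity of $\Gamma_M$ implies that $\rho\mapsto\Gamma_M(\rho)/\rho$ is nondecreasing, and you lean on this in Case 2. That implication is false in general: for functions vanishing at $0$ one has convex $\Rightarrow$ star-shaped (i.e.\ $f(\rho)/\rho$ nondecreasing) $\Rightarrow$ superadditive, and both implications are strict. Superadditivity only yields $\Gamma_M(n\rho)\geq n\,\Gamma_M(\rho)$, i.e.\ $\Gamma_M(\rho)/\rho\geq\Gamma_M(\rho/n)/(\rho/n)$ along the sequence of integer subdivisions, not monotonicity between arbitrary pairs of radii. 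Fortunately the factor you actually need is harmless: for $\sigma\leq\rho$, super-additivity gives $\Gamma_M(\rho)\geq\lfloor\rho/\sigma\rfloor\,\Gamma_M(\sigma)\geq\tfrac{\rho}{2\sigma}\Gamma_M(\sigma)$ (using $\lfloor x\rfloor\geq x/2$ for $x\geq1$), hence $\Gamma_M(\rho)/\rho\geq\tfrac12\,\Gamma_M(\sigma)/\sigma$. This ``monotonicity up to a factor $2$'' is all that Case 2 requires, since the constant $C$ in the lemma is unspecified; it is precisely the route the paper takes (via the choice of $t,t'$ with $r_{A\cap E}<t<t'<r_A$ and $\lfloor t'/t\rfloor\geq r_A/(2r_{A\cap E})$), and that formulation has the additional advantage of sidestepping the continuity of $\Gamma_M$ needed to write $\Gamma_M(r_F)=\mu(F)$, which your argument implicitly uses. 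With that substitution your proof is complete.
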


\begin{proof}[Proof that Lemma \ref{Faris} implies Theorem \ref{HPWL1}]
By assumption, $\mu(w\leq r)\leq \Gamma_M(r)$ and hence Lemma \ref{Faris} gives\be
\int_{w\leq r}|f|d\mu\leq Cr\int_M|\nabla_Hf|d\mu.
\ee This, together with \eqref{trivial-estimate}, gives the additive inequality \eqref{additiveHPW}. By the discussion above, we are done.
\end{proof}

\section{Proof of Lemma \ref{Faris} via the weak isoperimetric inequality}\label{isoperimetric-section}

We start by recalling the suitable notion of perimeter for subsets of $M$, adapted to the sub-Riemannian structure: the \emph{horizontal perimeter} of $E\subseteq M$ is defined by
\be \nor{\partial_H E}:= \inf \cur{ \liminf_{n \to \infty} \int_M \abs{\nabla_H f_n} d\mu : \{f_n\}_{n\in \N}\subseteq C^\infty_c(M) \text{ s.t. } f_n\rightarrow 1_E \text{ in }L^1(M)}.\ee
Of course, $\nor{\partial_HE}$ is always a well-defined element of $[0,+\infty]$.
 
The horizontal perimeter satisfies the following isoperimetric property.

\begin{thm}[Weak isoperimetric inequality]\label{weak-isoperimetric}
Let $M$ be a non-compact manifold satisfying the assumptions of Section \ref{statement-smooth}. If $E\subseteq M$ is a Borel set such that $\mu(E)\leq \Gamma_M(r)$, then\be
\mu(E)\leq Cr\nor{\partial_HE}.
\ee
\end{thm}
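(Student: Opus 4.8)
The plan is to follow the classical ``averaging against metric balls'' technique (the circle of ideas sketched in Section $6.43$ of \cite{gro-green} and used in \cite{coulhon-saloffcoste}): the point is that $G$-invariance lets one spread out $1_E$ without increasing its perimeter. Fix $m_0\in M$ and normalize a Haar measure $\nu$ on $G$ so that $\int_G g(h^{-1}\cdot m_0)\,d\nu(h)=\int_M g\,d\mu$ for $g\in C_c(M)$ (possible since $G$ is unimodular and the isotropy compact; the identity then holds with $m_0$ replaced by any point). For $\rho>0$ put $W_\rho:=\cur{h\in G:\ d_{CC}(h^{-1}\cdot m_0,m_0)<\rho}$, a symmetric set with $\nu(W_\rho)=\Gamma_M(\rho)$, and define, for $u\in C^\infty_c(M)$,
\be
(P_\rho u)(m):=\frac{1}{\Gamma_M(\rho)}\int_{W_\rho}u(h^{-1}\cdot m)\,d\nu(h)\qquad (m\in M).
\ee
The proof would rest on three properties of $P_\rho$:\begin{enumerate}[(a)]
\item $\nor{P_\rho u}_{L^\infty(M)}\leq \Gamma_M(\rho)^{-1}\nor{u}_{L^1(M)}$ and $\nor{P_\rho u}_{L^1(M)}\leq\nor{u}_{L^1(M)}$, both immediate from Fubini and the invariance of $\mu$;
\item $\nor{\nabla_H(P_\rho u)}_{L^1(M)}\leq\nor{\nabla_H u}_{L^1(M)}$, because each $\phi_{h^{-1}}$ is an isometry of $(\mathcal V,g)$, so $\abs{\nabla_H(u\circ\phi_{h^{-1}})}=\abs{\nabla_H u}\circ\phi_{h^{-1}}$, and one differentiates under the integral and uses again the invariance of $\mu$;
\item the Poincar\'e-type estimate $\nor{u-P_\rho u}_{L^1(M)}\leq C\rho\nor{\nabla_H u}_{L^1(M)}$, with $C$ absolute.
\end{enumerate}
By $L^1$-approximation, using (a) and the definition of $\nor{\partial_H E}$, these pass to $u=1_E$, so I may use (a)--(c) with $1_E$ in place of $u$.

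Granting this, the conclusion is short. First, $\Gamma_M$ is \emph{super-additive}: choosing $x$ with $d_{CC}(m_0,x)=2r$ (possible since $M$ is non-compact and connected) one checks that $B(m_0,r)$ and $B(x,r)$ are disjoint while both are contained in a ball of radius $2r$, so $\Gamma_M(2r)\geq 2\Gamma_M(r)$. Take $\rho:=2r$. Then by (a) and the hypothesis $\mu(E)\leq\Gamma_M(r)$,
\be
\nor{P_\rho 1_E}_{L^\infty(M)}\leq \frac{\mu(E)}{\Gamma_M(2r)}\leq\frac12,
\ee
so $1_E-P_\rho 1_E\geq\tfrac12$ on $E$, and therefore
\be
\tfrac12\mu(E)\leq\int_E\bra{1_E-P_\rho 1_E}\,d\mu\leq\nor{1_E-P_\rho 1_E}_{L^1(M)}\leq C\rho\nor{\partial_H E}=2Cr\nor{\partial_H E}
\ee
by (c); this is the claimed inequality, with a worse constant.

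The real work is the Poincar\'e estimate (c), and the main obstacle I anticipate is obtaining a bound that is \emph{uniform in $M$}. Writing $u(m)-(P_\rho u)(m)=\Gamma_M(\rho)^{-1}\int_{W_\rho}\bra{u(m)-u(h^{-1}\cdot m)}\,d\nu(h)$, one would like to control $u(m)-u(h^{-1}\cdot m)$ by integrating $\abs{\nabla_H u}$ along a horizontal path from $m$ to $h^{-1}\cdot m$ of length comparable to $\rho$; the subtlety is that, when the isotropy is non-trivial, the maps $\phi_{h^{-1}}$ ($h\in W_\rho$) need not move every point of $M$ by a controlled amount, so $\nor{u-u\circ\phi_{h^{-1}}}_{L^1(M)}$ cannot be handled one $h$ at a time. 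Following \cite{gro-green,coulhon-saloffcoste}, the way around this is to lift $E$ to $\pi^{-1}(E)\subseteq G$ (with $\pi\colon G\to M$ the orbit map), endow $G$ with a left-invariant sub-Riemannian structure lifting $(\mathcal V,g)$, use that \emph{left} translations of $G$ displace all points uniformly so that the analogue of (c) on $G$ is elementary, and then push the inequality back to $M$, checking that perimeter, measure and growth function are comparable under $\pi$ up to constants depending only on the compact isotropy. We carry out this reduction in the Appendix.
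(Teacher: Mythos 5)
Your reduction of the theorem to (a)--(c), and the endgame via superadditivity of $\Gamma_M$ and the $L^\infty$ bound on $P_{2r}1_E$, are sound (and the averaging idea is indeed the right circle of ideas). But the proof has a genuine gap at the step you yourself call ``the real work'': the pseudo-Poincar\'e estimate (c) is never proved, only deferred to an appendix that is not supplied, and the plan you sketch for it does not work as stated. First, a left-invariant distribution $\mathcal H\subseteq TG$ with $d\pi_e(\mathcal H_e)=\mathcal V_{m_0}$ need not be bracket-generating: for $G$ the Euclidean motion group acting on $M=\R^2$ with $\mathcal V=TM$, the lift spanned by the two translation fields is an integrable distribution, so there is no Carnot--Carath\'eodory distance on $G$ at all, and the set of $x$ at finite lifted distance from $e$ is a Haar-null subgroup; relatedly, for a \emph{left}-invariant metric it is \emph{right} translations, not left ones, that displace every point by exactly $d_G(x,e)$. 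Second, even where a lifted structure behaves well, ``constants depending only on the compact isotropy'' are not admissible here: the theorem (with the paper's convention) asserts an absolute constant, independent of $M$, $G$ and the isotropy, and comparisons of measure, perimeter and volume growth between $G$ and $M$ do carry isotropy-dependent factors. So the central analytic input of your argument is missing, and the proposed route to it would at best yield a non-uniform constant.

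The repair --- and essentially the paper's proof --- is to run the averaging on $G$ purely measure-theoretically, with \emph{right} translations of the lifted function, so that no sub-Riemannian structure on $G$ and no comparability constants are needed. Set $\widetilde u:=u\circ\pi_0$, where $\pi_0(x)=x\cdot 0$ and $\nu$ is normalized so that $(\pi_m)_\#\nu=\mu$ for every $m$ (unimodularity). The substitute for (c) is the inequality $\int_G\abs{\widetilde u(yx)-\widetilde u(y)}\,d\nu(y)\leq d_{CC}(0,x\cdot 0)\int_M\abs{\nabla_H u}\,d\mu$ for each fixed $x\in G$: join $0$ to $x\cdot 0$ by an almost-minimizing horizontal curve $\gamma$ in $M$, write $\widetilde u(yx)-\widetilde u(y)=u(y\cdot\gamma(T))-u(y\cdot\gamma(0))$, bound this by integrating $\abs{\nabla_H u}$ along the horizontal curve $y\cdot\gamma$ (same length, by $G$-invariance), and integrate in $y$ using Fubini and $(\pi_{\gamma(t)})_\#\nu=\mu$. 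Here the isotropy causes no loss because the curve lives in $M$ and right translation by $x$ only moves the base point of the orbit map. Averaging over $x$ in the lifted ball $B:=\pi_0^\leftarrow(B(0,\rho))$ (which has $\nu(B)=\Gamma_M(\rho)$, and $d_{CC}(0,x\cdot 0)<\rho$ for $x\in B$), and using $\nu(\widetilde E)=\mu(E)$ together with your superadditivity step $\Gamma_M(2r)\geq 2\Gamma_M(r)$, one concludes $\mu(E)\leq 2r\nor{\partial_H E}$ either by your $L^\infty$/$L^1$ endgame transplanted to $G$ or, as the paper does, by a Fubini double-counting argument over $x\in B$. As written, however, your argument is incomplete at its central step.
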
 
We sketch a proof of Theorem \ref{weak-isoperimetric} in the Appendix.

We call this a \emph{weak} isoperimetric inequality since, while it provides a quantitative lower bound for the perimeter of a set in terms of its measure, and it does it comparing the set with metric balls, it does not say which sets minimize the perimeter among those of a fixed volume.
 
Another respect in which Theorem \ref{weak-isoperimetric} is not sharp can be made apparent considering the Euclidean case, i.e.\ $M=\R^N$. The Euclidean isoperimetric inequality (see e.g. \cite{federer}, $3.2.43$) states that if $E$ has the same volume as a Euclidean ball of radius $r$, its perimeter must be greater than the perimeter of this ball, which equals $N\frac{\Gamma_{\R^N}(r)}{r}$. This is much better than Theorem \ref{weak-isoperimetric} when $N$ goes to infinity. Despite these limitations, Theorem \ref{weak-isoperimetric} has the advantage of being applicable at our level of generality.

We are now in a position to prove Lemma \ref{Faris}. We claim that it follows from a stronger version of Theorem \ref{weak-isoperimetric}, which we now state.

\begin{thm}[Weak isoperimetric inequality, stronger form]\label{weak-isoperimetric-decoupled}
If $\mu(E)\leq \Gamma_M(r)$, the following inequality holds for every Borel set $A$ of finite measure:\be
\mu(A\cap E)\leq Cr\nor{\partial_H A}.
\ee
\end{thm}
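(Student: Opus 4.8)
The plan is to deduce Theorem \ref{weak-isoperimetric-decoupled} from Theorem \ref{weak-isoperimetric} in two moves: first upgrade the latter to an \emph{a priori} lower bound on the horizontal perimeter valid for \emph{all} sets of finite measure (not only for those no larger than a ball of radius $r$), and then feed this bound, level set by level set, into the coarea formula applied to $C^\infty_c$ approximations of $1_A$.

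\emph{Step 1: a uniform perimeter bound.} I would first prove that for every Borel $U\subseteq M$ with $\mu(U)<\infty$ and every $r>0$,
\be
\min\cur{\mu(U),\,\Gamma_M(r)}\leq Cr\nor{\partial_H U}.
\ee
If $\mu(U)\leq\Gamma_M(r)$ this is Theorem \ref{weak-isoperimetric} verbatim. If $\mu(U)>\Gamma_M(r)$, set $s:=\inf\cur{t>0:\ \Gamma_M(t)\geq\mu(U)}$; this is finite since $\Gamma_M(t)\to+\infty$ as $t\to+\infty$ ($M$ being non-compact), and $s\geq r$. Because $\Gamma_M$ is left-continuous (it is the measure of an open ball) one has $\Gamma_M(s)\leq\mu(U)$, and applying Theorem \ref{weak-isoperimetric} at each scale $t>s$ and letting $t\downarrow s$ gives $\mu(U)\leq Cs\nor{\partial_H U}$. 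Invoking the super-additivity of $\Gamma_M$ from Section \ref{tech}, which forces $\Gamma_M(s)/s\geq\tfrac12\,\Gamma_M(r)/r$ for $s\geq r$ (write $n=\lfloor s/r\rfloor\geq1$, so $\Gamma_M(s)\geq\Gamma_M(nr)\geq n\Gamma_M(r)$ while $s<(n+1)r\leq 2nr$), we obtain
\be
\nor{\partial_H U}\geq\frac{\mu(U)}{Cs}\geq\frac{\Gamma_M(s)}{Cs}\geq\frac{1}{2C}\cdot\frac{\Gamma_M(r)}{r},
\ee
so that, since $\min\cur{\mu(U),\Gamma_M(r)}=\Gamma_M(r)$ in this case, the claimed bound holds (with $C$ updated).

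\emph{Step 2: coarea and passage to the limit.} We may assume $\nor{\partial_H A}<\infty$. Pick $f_n\in C^\infty_c(M)$ with $f_n\to 1_A$ in $L^1(M)$ and $\int_M\abs{\nabla_H f_n}\,d\mu\to\nor{\partial_H A}$; after replacing $f_n$ by $\max(0,\min(1,f_n))$ and mollifying we may assume $0\leq f_n\leq 1$. Cavalieri's principle and the coarea formula give
\be
\int_E f_n\,d\mu=\int_0^1\mu\bra{E\cap\cur{f_n>t}}\,dt,\qquad\int_M\abs{\nabla_H f_n}\,d\mu=\int_0^1\nor{\partial_H\cur{f_n>t}}\,dt.
\ee
For each $t\in(0,1)$ the set $\cur{f_n>t}$ has finite measure, so by Step 1 and the hypothesis $\mu(E)\leq\Gamma_M(r)$,
\be
\mu\bra{E\cap\cur{f_n>t}}\leq\min\cur{\mu(\cur{f_n>t}),\,\Gamma_M(r)}\leq Cr\nor{\partial_H\cur{f_n>t}}.
\ee
Integrating in $t$ yields $\int_E f_n\,d\mu\leq Cr\int_M\abs{\nabla_H f_n}\,d\mu$, and since $1_E f_n\to 1_E 1_A=1_{A\cap E}$ in $L^1(M)$, letting $n\to\infty$ gives $\mu(A\cap E)\leq Cr\nor{\partial_H A}$.

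I expect the main obstacle to be Step 1 in the regime $\mu(U)>\Gamma_M(r)$: one needs to know that a set whose measure exceeds that of a ball of radius $r$ necessarily has horizontal perimeter at least a fixed multiple of $\Gamma_M(r)/r$. Theorem \ref{weak-isoperimetric} by itself says nothing about such oversized sets; the extra ingredient is precisely the super-additivity of $\Gamma_M$, which makes $t\mapsto\Gamma_M(t)/t$ essentially non-decreasing and hence lets one transport the isoperimetric bound from the natural scale $s$ of $U$ back down to the scale $r$. Once Step 1 is available, the coarea argument is routine.
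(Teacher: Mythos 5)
Your proof is correct, and it reaches the conclusion by a route that differs in presentation from the paper's, though the key ingredient is the same. The paper introduces $r_S:=\inf\{t\geq0:\mu(S)\leq\Gamma_M(t)\}$, applies Theorem \ref{weak-isoperimetric} to $A$ at its own scale $r_A$, and then uses super-additivity of $\Gamma_M$ to show $\mu(A)\geq\frac{r_A}{2r_{A\cap E}}\mu(A\cap E)$, which combined with $\mu(A)\leq Cr_A\nor{\partial_H A}$ and $r_{A\cap E}\leq r$ gives the claim; this is arranged so as to avoid any continuity assumption on $\Gamma_M$ (with an eye to the discrete setting). Your Step 1 packages the same scale-transport idea as a uniform bound $\min\{\mu(U),\Gamma_M(r)\}\leq Cr\nor{\partial_H U}$, using left-continuity of $\Gamma_M$ (which does hold, for open balls, in both settings) and the estimate $\Gamma_M(s)/s\geq\Gamma_M(r)/(2r)$ for $s\geq r$; this is sound. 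The one substantive remark is that your Step 2 is entirely superfluous: Step 1 applied to $U=A$ already yields $\mu(A\cap E)\leq\min\{\mu(A),\mu(E)\}\leq\min\{\mu(A),\Gamma_M(r)\}\leq Cr\nor{\partial_H A}$, which is the theorem. The coarea/truncation/mollification machinery of Step 2 therefore buys you nothing here and only imports an unaddressed technical point (smoothing the truncated approximants on a sub-Riemannian manifold while controlling the horizontal gradient, which requires group convolution as in the Appendix). In the paper the coarea formula enters only afterwards, to pass from Theorem \ref{weak-isoperimetric-decoupled} to Lemma \ref{Faris}; you have effectively anticipated that step where it is not needed.
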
 Choosing $A=E$, we immediately see that Theorem \ref{weak-isoperimetric-decoupled} implies Theorem \ref{weak-isoperimetric}.\newline 
To obtain Lemma \ref{Faris} from Theorem \ref{weak-isoperimetric-decoupled} we need the coarea formula 
\bel\label{co} \int_0^\infty \nor{ \partial_H \{|f|>s\}  } ds =  \int_M|\nabla_H |f||d\mu, \eel which holds for any test function $f$ (a proof may be obtained by adapting in a straightforward way Proposition $4.2$ of \cite{mi}). Given $f\in C^\infty_c(M)$ and $E\subseteq M$ such that $\mu(E)\leq\Gamma_M(r)$, we can apply Theorem \ref{weak-isoperimetric-decoupled} to $A_s:=\{|f|>s\}$, obtaining\be
\mu(A_s\cap E)\leq Cr\nor{\partial_HA_s}\qquad\forall s>0.\ee Integrating both sides and applying the coarea formula \eqref{co}, we find\be
\int_E|f|d\mu\leq Cr\int_0^{+\infty}\nor{\partial_HA_s}ds=Cr\int_M|\nabla_H|f||d\mu\leq  Cr\int_M|\nabla_Hf|d\mu.
\ee
Actually, one can do the converse and obtain Theorem \ref{weak-isoperimetric-decoupled}  from Lemma \ref{Faris}, recalling the definition of horizontal perimeter.

\subsection{Proof of Theorem \ref{weak-isoperimetric-decoupled}}\label{tech} The key fact is a \emph{super-additivity} property of the volume growth function $\Gamma_M$:
\be \Gamma_M(r+s)\geq \Gamma_M(r)+\Gamma_M(s)\qquad\forall r,s\geq 0.\ee

We first show how Theorem \ref{weak-isoperimetric-decoupled} may be obtained by means of this fact, and then we prove it. 

We introduce, for a Borel set $S\subseteq M$, the notation\be
r_S:=\inf\{t\geq0: \mu(S)\leq \Gamma_M(t)\}.
\ee

Given $E,A\subseteq M$ with $\mu(E)\leq\Gamma_M(r)$, we assume without loss of generality that $\mu(A\cap E)>0$ and hence that $r_{A\cap E}>0$. Notice that $r_{A\cap E}\leq r_A,r$. Theorem \ref{weak-isoperimetric} applied to $A$ gives\bel\label{rA}
\mu(A)\leq Cr_A||\partial_HA||.
\eel If $r_A=r_{A\cap E}$, this inequality immediately implies Theorem \ref{weak-isoperimetric-decoupled}. We can then assume $r_{A\cap E}<r_A$ and choose $t,t'>0$ such that\be
r_{A\cap E}<t<t'<r_{A}\text{ and } \left\lfloor{\frac{t'}{t}}\right\rfloor \geq\frac{r_A}{2r_{A\cap E}},
\ee where $\lfloor\cdot\rfloor$ denotes the integer part. We have\be
\mu(A)> \Gamma_M(t')\geq \Gamma_M\left(\left\lfloor{\frac{t'}{t}}\right\rfloor t\right)\geq \left\lfloor{\frac{t'}{t}}\right\rfloor\Gamma_M(t)\geq\frac{r_A}{2r_{A\cap E}}\mu(A\cap E),
\ee where the second inequality follows from the claimed super-additivity of $\Gamma_M$. From this and \eqref{rA} we conclude, since $r_{A\cap E}\leq r$.

Notice that one may alternatively prove that $\Gamma_M$ is continuous and hence that $\Gamma_M(r_S)=\mu(S)$, slightly simplifying the above argument. On the other hand, this argument has the advantage of working also in the discrete setting, where $\Gamma_M$ is usually not continuous.
\newline

For the proof of the super-additivity, the first observation is that $(M,d_{CC})$ is a \emph{complete and locally compact path metric space} in the sense of Definition $1.7$ of \cite{gro-green}. The fact that it is a path metric space is elementary, while the Ball-Box Theorem for sub-Riemannian manifolds (Theorem $2.4.2$ of \cite{montgomery}) implies that $d_{CC}$ induces the manifold topology and hence that the metric space $(M,d_{CC})$ is locally compact. To prove the completeness, we observe that if we fix an arbitrary $0\in M$ the Ball-Box Theorem gives a $\delta>0$ such that $B(0,\delta)$ is pre-compact. By $G$-invariance, $B(m,\delta)$ is also pre-compact for any $m\in M$. If $\{x_n\}_{n\in N}$ is a Cauchy sequence, there is an $n_0$ such that $\{x_n\}_{n\geq n_0}$ is contained in $B(m,\delta)$ for some $m\in M$. Since this set is pre-compact, one can extract a convergent subsequence from $\{x_n\}_{n\geq n_0}$. This implies that the original sequence converges and hence that $(M,d_{CC})$ is complete. We can now apply the metric Hopf-Rinow Theorem (see \cite{gro-green}, p.9) to conclude that every open ball is pre-compact and that there is a minimizing geodesic connecting any pair of points of $M$. Since $\mu$ is finite on compact sets, it follows in particular that $\Gamma_M(r)<\infty$ for every $r\geq0$.

We are now in a position to prove the super-additivity of $\Gamma_M$. Take two points whose distance is exactly $2r+2s$. If $\gamma$ is a minimizing geodesic between them, consider the balls $B(\gamma(r),r)$ and $B(\gamma(2r+s),s)$. Since \ $d_{CC}(\gamma(u),\gamma(v))=|u-v|$ for any times $u,v$, it follows that these balls are both contained in $B(\gamma(r+s),r+s)$ and are disjoint. Hence \be
\Gamma_M(r)+\Gamma_M(s)=\mu(B(\gamma(r),r))+\mu(B(\gamma(2r+s),s))\leq \mu(B(\gamma(r+s),r+s))=\Gamma_M(r+s), 
\ee as we wanted.

\section{Deduction of the $L^p$ inequality from the $L^1$ inequality}\label{Lp}

We have established Lemma \ref{Faris} and hence Theorem \ref{HPWL1}. We now show how to prove Theorem \ref{HPW} for a general $p<+\infty$.

Fix $f\in C^\infty_c(M)$ and $\alpha>0$ and apply the just proved Theorem \ref{HPWL1} to $f':=|f|^p$ and $\alpha'=p\alpha$ (to be fair, $|f|^p$ is not smooth, but one may replace it with $(|f|^2+\eps)^\frac{p}{2}-\eps^\frac{p}{2}$ in what follows and then pass to the limit as $\eps\rightarrow0$). We obtain
\be
\int_M|f|^pd\mu\leq C\left(p\int_M|\nabla_H f||f|^{p-1}\right)^\frac{p\alpha}{p\alpha+1}\left(\int_Mw^{p\alpha}|f|^p\right)^\frac{1}{p\alpha+1},
\ee where we used Leibniz rule for $\nabla_H$. If we apply H\"older inequality with exponents $p$ and $\frac{p}{p-1}$ to the first integral on the right and then reorder the terms, we find\be
\nor{f}_{L^p(M)}\leq C^\frac{p\alpha+1}{p\alpha+p}p^\frac{\alpha}{\alpha+1}\nor{\nabla_H f}_{L^p(M)}^\frac{\alpha}{\alpha+1}\nor{w^\alpha f}_{L^p(M)}^\frac{1}{\alpha+1},
\ee which is estimate \eqref{hpw} (because $C^\frac{p\alpha+1}{p\alpha+p}$ is bounded uniformly in $p$ and $\alpha$).\newline This concludes the proof of Theorem \ref{HPW} (at least in the non-compact case).

\section{The compact case}\label{compact-section}

Since we described all of our proofs assuming that $M$ is non-compact, it is time to list the modifications one has to make so that everything works also in the compact case. In what follows, the section labelled \ref{compact-section}.$n$ describes how to modify Section $n$ above (here $n\in \{3,4,5\}$). 
Since now $M$ is compact, we may normalize $\mu$ and assume that it is a probability measure. Recall that $d_0$ denotes the diameter of $(M,d_{CC})$.\newline 
\setcounter{subsection}{2}
\subsection{}
The first thing to be modified is Lemma \ref{Faris}, because constant functions are trivial counterexamples to inequality \eqref{faris-estimate} in the compact case. We proceed as follows: if $f\in C^\infty_c(M)$, we define\bel\label{median}
m_0:=\inf\left\{t\in \R:\ \mu\{x\in M:\ f(x)\leq t\}\geq \frac{1}{2}\right\},
\eel which is a \emph{median} for $f$, i.e.\ it satisfies\bel\label{median-ineq}
\mu\{x\in M:\ f(x)> m_0\}\leq \frac{1}{2},\quad \mu\{x\in M:\ f(x)< m_0\}\leq \frac{1}{2}.
\eel 
We now state the substitute to Lemma \ref{Faris}, that will be proved in Subsection \ref{proof-Faris-compact}.
\begin{lem}\label{Faris-compact}
If $E\subseteq M$ is Borel and $\mu(E)\leq \Gamma_M(r)$, then the inequality\be
\int_E|f-m_0|d\mu\leq Cr\int_M|\nabla_Hf|d\mu
\ee holds for every $f\in C^\infty_c(M)$, where $m_0$ is as in \eqref{median}.
\end{lem}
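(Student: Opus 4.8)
The plan is to reduce Lemma~\ref{Faris-compact} to the non-compact gradient estimate (Lemma~\ref{Faris}) by a truncation-and-reflection trick centered at the median. Given $f\in C^\infty_c(M)$, split $E$ into $E^+:=E\cap\{f>m_0\}$ and $E^-:=E\cap\{f<m_0\}$ (the set $\{f=m_0\}$ contributes nothing to $\int_E|f-m_0|d\mu$), and treat the two pieces symmetrically, so I describe only $E^+$. On $E^+$ the function $g:=(f-m_0)^+=\max(f-m_0,0)$ coincides with $|f-m_0|$, so $\int_{E^+}|f-m_0|d\mu=\int_{E^+}g\,d\mu$. The point of working with $g$ rather than $f$ is that $g$ is supported, up to a null set, in $\{f\ge m_0\}$, whose measure is at most $\tfrac12$ by the median inequality \eqref{median-ineq}, and moreover $|\nabla_H g|\le|\nabla_H f|$ pointwise (where $g$ is differentiable). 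So $g$ behaves like a test function that ``lives'' on a set of measure $\le\tfrac12$, which is exactly the situation where the non-compact argument applies.

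Next I would run the coarea/isoperimetric machinery directly on $g$, mimicking the proof that Theorem~\ref{weak-isoperimetric-decoupled} implies Lemma~\ref{Faris}. For $s>0$ set $A_s:=\{g>s\}=\{f>m_0+s\}$; then $\mu(A_s)\le\mu\{f>m_0\}\le\tfrac12$. The one genuinely new ingredient is a \emph{weak isoperimetric inequality valid on the compact space $M$} for sets of measure $\le\tfrac12$: namely, if $\mu(A)\le\tfrac12$ and $\mu(E)\le\Gamma_M(r)$ then $\mu(A\cap E)\le Cr\nor{\partial_H A}$, with $C$ absolute and $r$ allowed to exceed the diameter $d_0$. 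This is the compact analogue of Theorem~\ref{weak-isoperimetric-decoupled}; it should be available from the same circle of ideas sketched in the Appendix (the relevant input being that, in a compact path metric space, a set of measure $\le\tfrac12$ is separated from its ``complementary half'' by a collar of controlled width, and the super-additivity of $\Gamma_M$ up to $r\wedge d_0$ still gives the needed comparison — one only needs $\Gamma_M(r_{A\cap E})\ge\text{const}\cdot\mu(A\cap E)$ and $r_{A\cap E}\le r$, with the convention $\Gamma_M(t)=\mu(M)$ for $t\ge d_0$). Granting this, I apply it to each $A_s$, integrate in $s$, and invoke the coarea formula \eqref{co} together with $|\nabla_H g|\le|\nabla_H f|$:
\be
\int_{E^+}g\,d\mu=\int_0^\infty\mu(A_s\cap E)\,ds\le Cr\int_0^\infty\nor{\partial_H A_s}\,ds=Cr\int_M|\nabla_H g|\,d\mu\le Cr\int_M|\nabla_H f|\,d\mu.
\ee

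The same estimate for $E^-$ is obtained by applying the argument to $-f$ (whose median is $-m_0$, and $\{-f>-m_0\}=\{f<m_0\}$ still has measure $\le\tfrac12$), giving $\int_{E^-}|f-m_0|d\mu\le Cr\int_M|\nabla_H f|d\mu$. Adding the two contributions yields the Lemma with the constant doubled, which is still absolute. The main obstacle is establishing the compact weak isoperimetric inequality for sets of measure $\le\tfrac12$ with an absolute constant independent of $M$ and uniform in $r$ (including $r\ge d_0$); once that is in hand, the rest is the routine coarea bookkeeping above. I would expect the paper to either isolate this as a separate statement in Section~\ref{compact-section} or to fold it into the Appendix's treatment of the isoperimetric inequality, since the super-additivity argument there needs only minor adjustments — essentially replacing the geodesic-translation construction (which produces two disjoint balls inside a larger one) by the observation that for $r<d_0$ two such balls still exist, while for $r\ge d_0$ the hypothesis $\mu(E)\le\Gamma_M(r)=\mu(M)$ is vacuous and one simply uses $\mu(A\cap E)\le\mu(A)\le\tfrac12\le Cd_0\nor{\partial_H A}\le Cr\nor{\partial_H A}$ via the plain (constant-free-in-$r$) Poincaré/isoperimetric inequality on the compact manifold.
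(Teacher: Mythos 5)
Your proposal is correct and follows essentially the same route as the paper: the key input you isolate (the compact ``stronger form'' weak isoperimetric inequality $\mu(A\cap E)\leq Cr\nor{\partial_H A}$ for $\mu(A)\leq 1/2$) is exactly Theorem~\ref{weak-isoperimetric-decoupled-compact}, which the paper likewise states separately and obtains by modifying the argument of Section~\ref{tech} and the Appendix. The only cosmetic difference is that you apply the coarea machinery to $(f-m_0)^{\pm}$ separately using $|\nabla_H(f-m_0)^{\pm}|\leq|\nabla_H f|$, whereas the paper keeps $f$ intact and uses the identity $\nor{\partial_H A}=\nor{\partial_H A^c}$ to merge the two level-set integrals.
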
 

This implies a \emph{Poincar\'e inequality}.

\begin{cor}\label{poincare}
If $f\in C^\infty_c(M)$ and $\int_Mfd\mu=0$, we have\be
\int_M|f|d\mu\leq C d_0\int_M|\nabla_Hf|d\mu.
\ee 
\end{cor}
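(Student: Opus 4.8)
The plan is to derive Corollary \ref{poincare} from Lemma \ref{Faris-compact} by a dyadic decomposition of $M$ into level sets relative to the median, applying the weak isoperimetric bound on each piece, and summing. First I would fix $f \in C^\infty_c(M)$ with $\int_M f \, d\mu = 0$ and let $m_0$ be its median as in \eqref{median}. The key point is to compare $\int_M |f|\,d\mu$ with $\int_M |f - m_0|\,d\mu$: since $\int_M f\,d\mu = 0$ we have $|m_0| = |m_0 - \int_M f\,d\mu| \le \int_M |f - m_0|\,d\mu$, and hence $\int_M |f|\,d\mu \le \int_M |f - m_0|\,d\mu + |m_0| \le 2\int_M |f - m_0|\,d\mu$ (using $\mu(M) = 1$). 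So it suffices to bound $\int_M |f - m_0|\,d\mu$ by $C d_0 \int_M |\nabla_H f|\,d\mu$.

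Next I would split $M$ into the superlevel and sublevel sets of $f$ relative to $m_0$ and handle each by a dyadic decomposition adapted to the scale $d_0$. Concretely, write $M_+ := \{f > m_0\}$ and $M_- := \{f < m_0\}$, and for $k \ge 0$ let $E_k^+ := \{ m_0 + 2^{-k-1} d_0 \, a < f \le m_0 + 2^{-k} d_0 \, a\}$ for a suitable normalization constant $a$ (chosen so that the largest shell sits at scale comparable to the diameter); similarly for $E_k^-$. Because $d_0$ is the diameter of $(M, d_{CC})$, one has $\Gamma_M(d_0) = \mu(M) = 1$, and the whole of $M_+$ (resp.\ $M_-$) has measure at most $1/2 \le \Gamma_M(d_0)$ by \eqref{median-ineq}; moreover each $E_k^{\pm}$ is contained in $M_\pm$, so $\mu(E_k^\pm) \le \Gamma_M(d_0)$ and Lemma \ref{Faris-compact} applies with $r = d_0$ to each shell. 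On $E_k^\pm$ the integrand $|f - m_0|$ is at most $2^{-k} d_0 \, a$, so $\int_{E_k^\pm} |f - m_0|\,d\mu \le 2^{-k} d_0\, a \cdot \mu(E_k^\pm)$; but I would instead apply Lemma \ref{Faris-compact} more cleverly — bounding $\int_{E_k^\pm} |f - m_0|\,d\mu$ directly is circular, so the right move is to apply the lemma to the truncated function $f_k := \max(\min(f, m_0 + 2^{-k} d_0 a), m_0)$ (or its negative counterpart), whose gradient is supported where $m_0 < f < m_0 + 2^{-k} d_0 a$ and whose restriction to $E_j^+$ for $j \ge k$ equals a constant of size $\approx 2^{-k} d_0 a$. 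Then $\int_{E_k^+} |f - m_0|\,d\mu$ telescopes against $\sum_k$ of terms $C d_0 \int_{\{m_0 < f < m_0 + 2^{-k} d_0 a\}} |\nabla_H f|\,d\mu$, and the geometric weights $2^{-k}$ make the sum converge to $C d_0 \int_{M_+} |\nabla_H f|\,d\mu$. The same argument on $M_-$ gives the matching bound, and adding the two completes the proof.

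The step I expect to be the main obstacle — or at least the one requiring the most care — is organizing the dyadic telescoping so that the geometric factor $2^{-k}$ is genuinely exploited and the constant stays absolute (independent of $M$, hence of $d_0$ except for the explicit $d_0$ factor) and independent of $f$. One has to be careful that at the coarsest scale the relevant superlevel set still has measure $\le \Gamma_M(d_0) = 1$ (this is exactly where the median property \eqref{median-ineq} is used, giving the $1/2$ slack), and that the normalization constant $a$ absorbed into the dyadic grid does not blow up. A clean way to package this is the layer-cake / distribution-function identity $\int_{M_+}(f - m_0)\,d\mu = \int_0^\infty \mu(\{f - m_0 > s\})\,ds$ combined with the observation that $\mu(\{f - m_0 > s\}) \le \Gamma_M(d_0)$ for all $s > 0$, so that Lemma \ref{Faris-compact} applied to $E = \{f - m_0 > s\}$ and to the function $\min(f - m_0, s)_+$ bounds $s \cdot \mu(\{f-m_0>s\})$ — wait, more precisely one integrates the isoperimetric inequality $\mu(\{f > m_0 + s\}) \le C d_0 \,\|\partial_H \{f > m_0 + s\}\|$ in $s$ and uses the coarea formula \eqref{co}, exactly as in Section \ref{isoperimetric-section}, to land on $C d_0 \int_{M_+} |\nabla_H f|\,d\mu$; this avoids the dyadic bookkeeping entirely and is probably the slickest route. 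Either way, the compact-case subtlety of \eqref{zero-mean} is handled once and for all by the passage from $|f|$ to $|f - m_0|$ at the start.
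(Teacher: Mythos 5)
Your proposal is correct, but it is far longer than it needs to be, and the detour obscures the one observation that makes the corollary immediate. Your first paragraph is exactly the paper's reduction: since $\mu(M)=1$ and $\int_M f\,d\mu=0$, one has $|m_0|=|m_0-\int_Mf\,d\mu|\le\int_M|f-m_0|\,d\mu$, hence $\int_M|f|\,d\mu\le 2\int_M|f-m_0|\,d\mu$. At that point you should simply apply Lemma \ref{Faris-compact} with $E=M$: since $\mu(M)=1\le\Gamma_M(r)$ for any $r>d_0$ (say $r=2d_0$), the lemma gives $\int_M|f-m_0|\,d\mu\le 2Cd_0\int_M|\nabla_Hf|\,d\mu$, and you are done. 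Your worry that "bounding $\int_{E}|f-m_0|\,d\mu$ directly is circular" is unfounded --- that bound is precisely the content of Lemma \ref{Faris-compact}, valid for \emph{any} Borel $E$ with $\mu(E)\le\Gamma_M(r)$, and $E=M$ qualifies; nothing in its proof presupposes the Poincar\'e inequality. The dyadic decomposition with truncations $f_k$ is therefore unnecessary (and would require extra care, e.g.\ smoothness of the truncations and identification of their medians), and your final "slickest route" --- integrating the isoperimetric inequality $\mu(\{f>m_0+s\})\le Cd_0\nor{\partial_H\{f>m_0+s\}}$ over $s$ and invoking the coarea formula --- is not a shortcut but a re-derivation of the proof of Lemma \ref{Faris-compact} (Subsection \ref{proof-Faris-compact}) specialized to $E=M$. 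It is a valid proof of the corollary from Theorem \ref{weak-isoperimetric-compact}, so the mathematics checks out; the only thing you missed is that the lemma you were asked to deduce the corollary from already does all of this work for you.
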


\begin{proof}[Proof that Lemma \ref{Faris-compact} implies Corollary \ref{poincare}]
It is enough to notice that, if $f$ has zero average, \be
\int_M|f|d\mu\leq \int_M|f-m_0|d\mu+|m_0-\int_Mfd\mu|\leq 2\int_M|f-m_0|d\mu.
\ee 
\end{proof}

We now show how to prove the $L^1$ inequality of Theorem \ref{HPW} in the compact case. Let $f\in C^\infty_c(M)$ be such that $\int_Mfd\mu=0$. We will prove that \bel\label{compact-r}
\int_M|f|d\mu\leq Cr\int_M|\nabla_Hf|d\mu+Cr^{-\alpha}\int_Mw^\alpha|f|d\mu\qquad\forall r>0.
\eel If $r\geq d_0/8$, this is a trivial consequence of Corollary \ref{poincare}. Assume now that $r\leq d_0/8$:\bee
\int_M|f|d\mu&\leq&\int_{w\leq r}|f-m_0|d\mu+|m_0|\mu(w\leq r)+\int_{w> r}|f|d\mu\\
&\leq & Cr\int_M|\nabla_Hf|d\mu+|m_0|\Gamma_M(r)+r^{-\alpha}\int_Mw^\alpha|f|d\mu,
\eee where we used the hypothesis \eqref{hyphpw} and Lemma \ref{Faris-compact}. 

Observe that the super-additivity of $\Gamma_M$ (Section \ref{tech}) has the following analogue in the compact setting: for every $r,s\geq 0$ \emph{such that $r+s\leq d_0/2$}, $\Gamma_M(r+s)\geq \Gamma_M(r)+\Gamma_M(s)$. In particular this implies $\Gamma_M(d_0/8)\leq 1/4$.
Moreover Markov inequality gives $|m_0|\leq 2\int_M|f|d\mu$. These two facts imply that $|m_0|\Gamma_M(r)\leq \int_M|f|d\mu/2$, which can then be reabsorbed in the left hand side of our estimate, completing the proof of \eqref{compact-r}.

\subsection{}\label{proof-Faris-compact} Our task is to prove Lemma \ref{Faris-compact}. The geometric counterpart of the existence of constant test functions, which causes Lemma \ref{Faris} to fail in the compact case, is the fact that $E=M$ has measure $1$ and perimeter $0$, which causes Theorem \ref{weak-isoperimetric} to fail in the compact case. The required reformulation of Theorem \ref{weak-isoperimetric} is as follows.

\begin{thm}[Weak isoperimetric inequality, compact case]\label{weak-isoperimetric-compact} Let $M$ be a compact manifold satisfying the assumptions of Section \ref{statement-smooth}. If $E\subseteq M$ is a Borel set such that $\min\{\mu(E),\mu(E^c)\}\leq\frac{\Gamma_M(r)}{2}$, then \be
\min\{\mu(E),\mu(E^c)\}\leq Cr\nor{\partial_HE}.
\ee
\end{thm}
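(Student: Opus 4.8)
The plan is to follow the proof of Theorem~\ref{weak-isoperimetric} sketched in the Appendix, the two genuinely new inputs being a symmetrisation step and the fact that, on a compact manifold, the super-additivity of $\Gamma_M$ holds only below the diameter scale. First I would reduce to the case $\mu(E)\le\mu(E^c)$, so that $\min\cur{\mu(E),\mu(E^c)}=\mu(E)\le\Gamma_M(r)/2\le 1/2$, the last bound because $\mu$ is a probability measure. This reduction is legitimate since the horizontal perimeter is unchanged under complementation: if $\cur{f_n}_{n\in\N}\subseteq C^\infty_c(M)$ converges to $1_E$ in $L^1(M)$, then, $M$ being compact, $1-f_n\in C^\infty_c(M)$ converges to $1_{E^c}$ and $\nabla_H(1-f_n)=-\nabla_H f_n$, whence $\nor{\partial_H E}=\nor{\partial_H E^c}$. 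Moreover, since $\Gamma_M$ is eventually constant equal to $1$ while the conclusion only weakens as $r$ grows, it is enough to treat the range $r\le d_0$.

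I would then split this range at $r=d_0/8$. For $r\le d_0/8$ the filling and co-area argument behind Theorem~\ref{weak-isoperimetric} carries over with only bookkeeping changes: that argument controls $\mu(E\cap B(x,t))$ for $t$ in an interval comparable to $[0,r]$ by means of a relative isoperimetric inequality inside balls, and it invokes the growth function $\Gamma_M$ and its super-additivity only at scales comparable to $r$; since $r\le d_0/8$ all such scales lie in $[0,d_0/2]$, where the restricted super-additivity $\Gamma_M(u+v)\ge\Gamma_M(u)+\Gamma_M(v)$ recorded above is available. The extra factor $1/2$ in the hypothesis $\mu(E)\le\Gamma_M(r)/2$, compared with the bare $\mu(E)\le\Gamma_M(r)$ of the non-compact statement, is exactly what compensates for having only a one-step form of super-additivity in place of its iterated version: it keeps $E$ strictly the minority set in the balls the argument produces.

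Finally, for $d_0/8<r\le d_0$ it suffices to establish the Cheeger-type inequality $\min\cur{\mu(E),\mu(E^c)}\le Cd_0\nor{\partial_H E}$ for every Borel $E$ with $\mu(E)\le1/2$, since then $\min\cur{\mu(E),\mu(E^c)}\le Cd_0\nor{\partial_H E}\le 8Cr\nor{\partial_H E}$. This bound is again obtained from the circle of ideas of the Appendix, now run at the single scale $\sim d_0$ supplied by the Hopf--Rinow theorem (minimising geodesics of length $\le d_0$ joining suitably chosen points of $E$ and of $E^c$); crucially one must not derive it from Corollary~\ref{poincare}, which depends on the present theorem. The main obstacle, and the structural reason the compact statement differs from the non-compact one, lies precisely here: super-additivity of $\Gamma_M$ fails past the diameter, so the ball-filling procedure cannot be iterated indefinitely and must be halted at scale $\sim d_0$; this is why the conclusion degrades from $\mu(E)$ to $\min\cur{\mu(E),\mu(E^c)}$ and why the hypothesis has to be halved.
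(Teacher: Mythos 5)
Your reduction to the case $\mu(E)\le\mu(E^c)$ via the complementation symmetry $\nor{\partial_H E}=\nor{\partial_H E^c}$ is correct and is indeed the one new ingredient the compact statement needs. But beyond that the proposal has a genuine gap: both of its main steps are asserted rather than proved. For $r\le d_0/8$ you claim that ``the filling and co-area argument behind Theorem~\ref{weak-isoperimetric} carries over with only bookkeeping changes,'' describing that argument as a ball-filling procedure resting on a relative isoperimetric inequality inside balls and on super-additivity of $\Gamma_M$ at scales comparable to $r$. No such argument is established anywhere in the paper (a relative isoperimetric inequality in balls is itself a nontrivial claim at this level of generality), so you are adapting a proof you have not exhibited; and the description does not match the actual proof of Theorem~\ref{weak-isoperimetric}, which is a group-translation averaging argument (steps \ref{double-counting}--\ref{almost} of the Appendix) that uses no covering, no relative isoperimetry, and no super-additivity at all. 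Likewise the Cheeger-type inequality you invoke for $d_0/8<r\le d_0$ is justified only by an appeal to ``the circle of ideas of the Appendix, now run at the single scale $\sim d_0$,'' with no actual argument.

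The paper's route is both different and much shorter: one lifts $E$ to $\widetilde E=\pi_0^\leftarrow(E)\subseteq G$, proves $\nu(\widetilde Ex\triangle\widetilde E)\le d_{CC}(0,x\cdot 0)\nor{\partial_H E}$ (steps \ref{grad-dist}--\ref{grad-dist-set}), and averages over $x$ in the lift $B$ of a ball of radius $r$; the double-counting Lemma \ref{double-counting} needs exactly $\nu(\widetilde E)\le\nu(B)/2$, i.e.\ $\mu(E)\le\Gamma_M(r)/2$, which is the hypothesis after your complementation step. This gives $\mu(E)\le r\nor{\partial_H E}$ for all $r$ at once, with no case split at $d_0/8$, no separate Cheeger inequality, and no use of compactness or non-compactness. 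Your structural diagnosis is therefore also off: the factor $1/2$ in the hypothesis is not compensating for a one-step super-additivity inside the argument; it is the natural hypothesis of the averaging lemma, and super-additivity enters only in the non-compact case (step \ref{weaker}) to remove that $1/2$ by doubling the radius --- precisely the move that is unavailable past the diameter, which is why the compact statement retains the $1/2$ and is phrased with $\min\{\mu(E),\mu(E^c)\}$.
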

See the Appendix for a proof. Theorem \ref{weak-isoperimetric-decoupled} can be reformulated as follows.

\begin{thm}[Weak isoperimetric inequality, stronger form, compact case]\label{weak-isoperimetric-decoupled-compact}\ \newline
If $\mu(E)\leq\Gamma_M(r)$, the following inequality holds for every Borel set $A$ of measure $\leq 1/2$:\be
\mu(A\cap E)\leq Cr\nor{\partial_H A}.
\ee
\end{thm}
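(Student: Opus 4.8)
The plan is to transcribe the proof of Theorem~\ref{weak-isoperimetric-decoupled} given in Section~\ref{tech}, with its two ingredients replaced by their compact analogues: Theorem~\ref{weak-isoperimetric-compact} in place of Theorem~\ref{weak-isoperimetric}, and the restricted super-additivity $\Gamma_M(s+s')\geq\Gamma_M(s)+\Gamma_M(s')$ for $s+s'\leq d_0/2$ (noted just above) in place of the unrestricted one. We may assume $\nor{\partial_H A}<+\infty$ and $\mu(A\cap E)>0$, the remaining cases being trivial. The first move is to clear the range of large $r$: applying Theorem~\ref{weak-isoperimetric-compact} to the set $A$ with radius $2d_0$ — for which $\Gamma_M(2d_0)=1$, since a ball of radius $2d_0$ exhausts $M$, while $\min\{\mu(A),\mu(A^c)\}=\mu(A)\leq 1/2=\Gamma_M(2d_0)/2$ — yields a Poincaré-type bound $\mu(A)\leq C d_0\nor{\partial_H A}$ valid for every $A$ with $\mu(A)\leq 1/2$ (this is essentially Corollary~\ref{poincare} in disguise). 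In particular, if $r\geq d_0/8$ then $\mu(A\cap E)\leq\mu(A)\leq Cd_0\nor{\partial_H A}\leq 8Cr\nor{\partial_H A}$ and we are done; so from now on we assume $r<d_0/8$.

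For $r<d_0/8$, set $\rho:=\inf\{t\geq 0:\ \mu(A)\leq\Gamma_M(t)/2\}$, which is finite ($\leq 2d_0$) by the previous step. For every $t>\rho$ we have $\min\{\mu(A),\mu(A^c)\}=\mu(A)\leq\Gamma_M(t)/2$, so Theorem~\ref{weak-isoperimetric-compact} gives $\mu(A)\leq Ct\nor{\partial_H A}$, and letting $t\downarrow\rho$ we obtain $\mu(A)\leq C\rho\nor{\partial_H A}$. If $\rho\leq r$ this already gives $\mu(A\cap E)\leq\mu(A)\leq Cr\nor{\partial_H A}$; so assume $\rho>r$ and set $\tau:=\min\{\rho,d_0/2\}$, noting $\tau>r$ (both $\rho>r$ and $d_0/2>r$). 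The key estimate is $\mu(A\cap E)\leq\frac{4r}{\tau}\mu(A)$. To prove it, pick any $s\in(r,\tau)$; since $s<\rho$ we have $\mu(A)>\Gamma_M(s)/2$, and since the partial sums $r,2r,\dots,\lfloor s/r\rfloor r$ are all $\leq s\leq d_0/2$ the restricted super-additivity telescopes to $\Gamma_M(s)\geq\lfloor s/r\rfloor\Gamma_M(r)\geq\frac{s}{2r}\Gamma_M(r)\geq\frac{s}{2r}\mu(A\cap E)$, using $\mu(A\cap E)\leq\mu(E)\leq\Gamma_M(r)$ and $\lfloor s/r\rfloor\geq\frac{s}{2r}$ (valid since $s/r\geq 1$); hence $\mu(A)>\frac{s}{4r}\mu(A\cap E)$, and letting $s\uparrow\tau$ gives the claim. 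Combining with $\mu(A)\leq C\rho\nor{\partial_H A}$ and splitting on $\tau=\rho$ (when $\rho\leq d_0/2$) versus $\tau=d_0/2$ (when $\rho>d_0/2$, where one also uses $\rho\leq 2d_0$) produces $\mu(A\cap E)\leq C'r\nor{\partial_H A}$ in all cases.

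The only genuine novelty compared with Section~\ref{tech} is that super-additivity of $\Gamma_M$ is available only below the scale $d_0/2$, so the telescoping cannot be pushed all the way up to $\rho$ when $A$ is a \emph{large} set (i.e.\ $\rho$ comparable to the diameter); capping the argument at the scale $\tau=\min\{\rho,d_0/2\}$ and absorbing the residual loss through the Poincaré-type bound $\mu(A)\leq Cd_0\nor{\partial_H A}$ is exactly what repairs this, and I expect it to be the only delicate point. As in Section~\ref{tech}, the integer-part trick keeps the argument from needing any continuity of $\Gamma_M$, and everything else is a routine transcription.
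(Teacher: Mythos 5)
Your proposal is correct and follows exactly the route the paper intends: the paper only remarks that the proof is ``a minor modification of the argument in \ref{tech}'', and your write-up supplies precisely those modifications (the factor $\tfrac12$ coming from the hypothesis of Theorem~\ref{weak-isoperimetric-compact}, the cap $\tau=\min\{\rho,d_0/2\}$ forced by the restricted super-additivity, and the Poincar\'e-type bound $\mu(A)\leq Cd_0\nor{\partial_H A}$ to absorb the large-$r$ and large-$\rho$ regimes). All the individual steps check out, including the telescoping below scale $d_0/2$ and the integer-part bound $\lfloor s/r\rfloor\geq s/(2r)$ for $s\geq r$.
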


The proof is a minor modification of the argument in \ref{tech}. Now we have to adapt to the compact case the argument of Section \ref{isoperimetric-section}, in order to deduce Lemma \ref{Faris-compact} from Theorem \ref{weak-isoperimetric-decoupled-compact}. We write $A_s=\{x\in M: f(x)> s\}$ and take $E\subseteq M$ such that $\mu(E)\leq \Gamma_M(r)$. \bee
\int_E |f-m_0|d\mu&=&\int_{E\cap \{f\geq m_0\}} (f-m_0)d\mu+\int_{E\cap \{f<m_0\}} (m_0-f)d\mu \\
&=&\int_0^{+\infty}\mu(\{f-m_0>s\}\cap E)ds+\int_0^{+\infty}\mu(\{m_0-f\geq s\}\cap E)ds\\
&=&\int_{m_0}^{+\infty}\mu(A_s\cap E)ds+\int_{-\infty}^{m_0}\mu(A_s^c\cap E)ds.
\eee Inequalities \eqref{median-ineq} imply that $\mu(A_s)\leq 1/2$ for $s>m_0$ and $\mu(A_s^c)\leq 1/2$ for $s<m_0$. We can then apply Theorem \ref{weak-isoperimetric-decoupled-compact} and obtain\be
\int_E|f-m_0|d\mu\leq Cr\left( \int_{m_0}^{+\infty}\nor{\partial_HA_s}ds+\int_{-\infty}^{m_0}\nor{\partial_HA_s^c}ds\right).
\ee Notice that $\nor{\partial_HA}=\nor{\partial_HA^c}$ in the compact case, as one can deduce from the definition of the horizontal perimeter. Applying coarea formula, \be
\int_E |f-m_0|d\mu\leq Cr \int_{-\infty}^{+\infty}\nor{\partial_HA_s}ds\leq Cr\int_M|\nabla_Hf|d\mu, 
\ee which is Lemma \ref{Faris-compact}. This concludes the proof of the $L^1$ inequality of Theorem \ref{HPW} in the compact case.

\subsection{} The last step is the deduction of the $L^p$ estimate. Unfortunately the trick of Section \ref{Lp} cannot work in the compact case, since $|f|^p$ does not have average $0$, and we need another twist in our argument. First, we show that the Poincar\'e inequality of Corollary \ref{poincare} has an $L^p$ counterpart, i.e.\ \bel\label{Lppoincare}
\left(\int_M|f|^pd\mu\right)^\frac{1}{p}\leq Cpd_0\left(\int_M|\nabla_Hf|^pd\mu\right)^\frac{1}{p}
\eel holds for any $p<+\infty$ and $f\in C^\infty_c(M)$ with zero average. To see this we notice that if $m_0$ is the median of $f$, then $m_0|m_0|^{p-1}$ is the median of $f|f|^{p-1}$ and \bel\label{poincare-start}
\int_M|f|^pd\mu\leq \int_M\left|f|f|^{p-1}-m_0|m_0|^{p-1}\right|d\mu+\left|\int_M(m_0-f)d\mu\right||m_0|^{p-1},
\eel where we used the fact that $\int_Mfd\mu=0$. Now we apply Lemma \ref{Faris-compact} twice to control the right hand side with\be
Cd_0\int_M\left|\nabla_H(f|f|^{p-1})\right|d\mu+Cd_0\int_M|\nabla_Hf|d\mu\cdot|m_0|^{p-1}.
\ee Since $m_0|m_0|^{p-2}$ is the median of $f|f|^{p-2}$, Markov inequality gives $|m_0|^{p-1}\leq 2\int_M|f|^{p-1}d\mu$ and, by Leibniz and H\"older, we estimate everything by\be
Cpd_0\left(\int_M|\nabla_Hf|^pd\mu\right)^\frac{1}{p}\left(\int_M|f|^pd\mu\right)^{1-\frac{1}{p}}.
\ee Comparing with \eqref{poincare-start}, this proves \eqref{Lppoincare}.\newline 
Analogously to our deduction of \eqref{compact-r}, we have now to prove \bel\label{compact-r-p}
\nor{f}_{L^p(M)}\leq Cpr\nor{\nabla_Hf}_{L^p(M)}+Cr^{-\alpha}\nor{w^\alpha f}_{L^p(M)}
\eel for every $r\leq d_0/8$, since in the complementary interval this is implied by \eqref{Lppoincare}. We do this combining what we did above for the $p=1$ case and for the $L^p$ Poincar\'e inequality:\bee
\nor{f}_{L^p(M)}^p&=&\int_M|f|^pd\mu \\ 
&\leq& \int_{w\leq r}|f|^pd\mu+r^{-\alpha}\int_Mw^\alpha |f||f|^{p-1}d\mu\\
&\leq&\int_{w\leq r}\left|f|f|^{p-1}-m_0|m_0|^{p-1}\right|d\mu+|m_0|^p\Gamma_M(r)\\ 
&&+r^{-\alpha}\left(\int_Mw^{p\alpha}|f|^pd\mu\right)^\frac{1}{p}\left(\int_M|f|^pd\mu\right)^{1-\frac{1}{p}}\\
&\leq&Cpr\int_M|\nabla_Hf||f|^{p-1}d\mu+\frac{2\int_M|f|^pd\mu}{4}\\
&&+r^{-\alpha}\left(\int_Mw^{p\alpha}|f|^pd\mu\right)^\frac{1}{p}\left(\int_M|f|^pd\mu\right)^{1-\frac{1}{p}}\\
&\leq& Cpr\nor{\nabla_Hf}_{L^p(M)}\nor{f}_{L^p(M)}^{p-1}\\
&&+\frac{\nor{f}_{L^p(M)}^p}{2}+r^{-\alpha}\nor{w^\alpha f}_{L^p(M)}\nor{f}_{L^p(M)}^{p-1}.
\eee Reabsorbing the second term and dividing both sides by $\nor{f}_{L^p(M)}^{p-1}$, we find \eqref{compact-r-p}.\newline This concludes the proof of Theorem \ref{HPW} in the compact case too.

\section{Discrete setting}\label{discrete-setting}

In this section we will work on certain discrete homogeneous spaces for finitely generated groups. We begin by describing precisely our setting and then we state our main result.\newline

\par First of all, if $M=(V,E)$ is a countable (finite or infinite) graph, we denote by $\sim$ the adjacency relation and by $|A|$ the counting measure of $A\subseteq V$. We also denote $\ell^p(V)$ the associated Lebesgue spaces of functions on $V$.\newline 
Recall that the \emph{graph distance} is defined by\be
d(m,n):=\inf\text{ length}(\gamma)\qquad (m,n\in V),
\ee where the $\inf$ is taken as $\gamma$ varies over the curves $\gamma:\{0,1,\dots,T\}\rightarrow V$ ($T\in \N$) that connect $m$ to $n$ and are \emph{admissible}, i.e.\ \be
\gamma(i)\sim\gamma(i+1)\qquad\forall i\in\{0,\dots,T-1\}.\ee
 The length of such a curve is given by $T$. The graph metric is a genuine distance if and only if $M$ is connected as a graph. We let $B\!\bra{m,r} = \!\{n\in V: d\!\bra{m,n} <r\}$ be the open ball with center $m\in V$ and radius $r \geq1$ (radii $<1$ are uninteresting in the discrete setting). 

We next define the \emph{gradient} of a function $f: V \to \R$ as follows:
\[\abs{\nabla f} \!\bra{m} := \sum_{n :\ m\sim n } \abs{f\!\bra{m} - f\!\bra{n}} \period\]
Notice that we are not really defining the gradient $\nabla f$, but only its modulus, which is all we need for our results. We will also write $\nor{\nabla f}_{\ell^p(V)}$ for $\nor{\abs{\nabla f}}_{\ell^p(V)}$. In particular, the \emph{edge-perimeter} of a set $A\subseteq M$ is\be
\nor{\partial_E A}:=\nor{\nabla 1_A}_{\ell^1(V)}\in\N\cup\{+\infty\}.
\ee 

Assume now that a group $G$ acts on the left on $M$. By this we mean that $G$ acts on the vertex set $V$ and that the action preserves the graph structure, i.e.\ $m\sim n$ if and only if $x\cdot m\sim x\cdot n$ for any $x\in G$ and $m,n\in V$. We adopt the same notation as in the smooth setting, writing $\phi_x(m):=x\cdot m$ for the action of $x\in G$ on $m\in M$. In such a case, the distance $d$ is $G$-invariant, i.e.\  
\[ d\!\bra{x\cdot m, x\cdot n} = d\!\bra{m,n}  \quad \forall x \in G,\ \forall m,n \in V\]
and the modulus of the gradient is $G$-invariant, i.e.\ \be
|\nabla(f\circ\phi_x)|=|\nabla f|\circ\phi_x\qquad\forall x\in G,\ \forall f:V\rightarrow \C.
\ee 


\subsection{Standing assumptions and statement of the main result in the discrete setting}\label{assumptions-discrete}

We state our \emph{standing assumptions} in the discrete setting:
\begin{enumerate}[(i)]
\item $M=(V,E)$ is a connected countable (finite or infinite) graph, 
\item $G$ is a finitely generated group,
\item $G$ has a transitive left action on $M$ (and in particular on $V$),
\item the isotropy subgroup of some, hence every, point of $V$ is finite,
\item the degree of some, hence every, vertex of $M$ is finite.
\end{enumerate}

Recall that the degree of a vertex of a graph is the number of vertices adjacent to it. We let $\delta$ be the degree of the vertices of $M$. According to the previous section, a graph $M$ satisfying the standing assumptions has a $G$-invariant distance $d$ and there is a $G$-invariant gradient $\nabla$ for functions on $V$. 

The main example to keep in mind is given by the Schreier coset graph associated to a finitely generated group $G$, a finite subgroup $K$ and a finite symmetric generating set $S\subseteq G$. This means that $V:=G\slash K$, on which $G$ acts on the left, $xK\sim yK$ if and only if $xK\neq yK$ and $yK=sxK$ for some $s\in S$. If $K$ is the trivial subgroup, this is nothing but the Cayley graph of $(G,S)$. Notice that these graphs are connected because $S$ is generating, and that $\delta \le |S|$. 

Observe that the counting measure provides a $G$-invariant measure on $V$, which is easily seen to be unique up to positive multiples (the implicit $\sigma$-algebra is the discrete one). We introduce therefore the \emph{volume growth function} of open balls,
\[ \Gamma_M\!\bra{r} := \abs{  B\!\bra{m,r} } \qquad\forall r \ge 1, \] which is well-defined by $G$-invariance and transitivity.

We state the main result in the discrete setting. 

\begin{thm}\label{HPW-discrete} Assume that $M$ satisfies the assumptions above and that $w:V\rightarrow[1,+\infty)$ is a function such that\be
\abs{\cur{m \in V:\ w(m)\leq r}} \leq \Gamma_M(r)\qquad\forall r\geq1.
\ee
If $V$ is infinite, then the inequality
\begin{equation}
\label{hpw-discrete}
\nor{f}_{\ell^p\!\bra{V}} \le C p^{\frac{\alpha}{\alpha+1} }\nor{ \nabla f }_{\ell^p\!\bra{V}}^{\frac{\alpha}{\alpha+1} } \nor{w^\alpha f}^{\frac{1}{\alpha +1}}_{\ell^p\!\bra{V}}
\end{equation}
holds for every finitely supported $f:V\rightarrow\R$ and $\alpha>0$. If $V$ is finite, the inequality holds for every $f$ that satisfies the additional assumption
\[ \sum_{m\in V}f(m) = 0\period \]
\end{thm}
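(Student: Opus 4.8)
The plan is to mirror, step by step, the entire smooth-setting argument (Sections \ref{uncertainty-isoperimetric}--\ref{compact-section}), replacing the horizontal gradient by the discrete gradient $|\nabla f|$, the horizontal perimeter by the edge-perimeter $\nor{\partial_E A}$, the measure $\mu$ by the counting measure, and integrals by sums, while keeping track of the degree $\delta$ and checking that it never enters the final constant. First I would record the discrete analogues of the two basic tools. The discrete \emph{coarea formula} is the elementary identity $\sum_{s\geq 0}\nor{\partial_E\{f>s\}} = \nor{\nabla f}_{\ell^1(V)}$ for $f:V\to\mathbb{Z}_{\geq 0}$ (valid for nonnegative integer-valued $f$; for general finitely supported $f$ one approximates $|f|$ by integer multiples, or works directly with the layer-cake decomposition along the range of $|f|$ using that the level sets jump only at values attained by $|f|$). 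The discrete \emph{super-additivity} $\Gamma_M(r+s)\geq \Gamma_M(r)+\Gamma_M(s)$ for integers $r,s\geq 1$ follows exactly as in Section \ref{tech}: since $M$ is connected, locally finite and has a $G$-invariant metric, every ball is finite; pick two vertices at distance $2r+2s$, take a minimizing admissible path $\gamma$ between them, and note that $B(\gamma(r),r)$ and $B(\gamma(2r+s),s)$ are disjoint (their centers are at distance $r+s$, and radii are $r,s$) and both contained in $B(\gamma(r+s),r+s)$ --- here one must be slightly careful with the open-ball convention, but the strict inequality $d(\gamma(u),\gamma(v))=|u-v|$ along a geodesic makes the disjointness and containment go through, possibly after noting $\Gamma_M(r)=|B(m,r)|$ is non-decreasing and left-continuous in the obvious discrete sense.

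With these in hand, the infinite case goes as follows. From the trivial split $\nor{f}_{\ell^1(V)} = \sum_{w\leq r}|f| + \sum_{w>r}|f| \leq \sum_{w\leq r}|f| + r^{-\alpha}\sum_V w^\alpha|f|$, it suffices to prove the discrete \emph{gradient estimate} (the analogue of Lemma \ref{Faris}): if $E\subseteq V$ is finite with $|E|\leq \Gamma_M(r)$, then $\sum_E|f|\leq Cr\nor{\nabla f}_{\ell^1(V)}$. This in turn reduces, via the coarea formula applied to the level sets $A_s=\{|f|>s\}$, to the discrete \emph{weak isoperimetric inequality in decoupled form}: if $|E|\leq \Gamma_M(r)$ and $A\subseteq V$ is finite, then $|A\cap E|\leq Cr\nor{\partial_E A}$. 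The derivation of the decoupled form from the plain weak isoperimetric inequality $|E|\leq Cr\nor{\partial_E E}$ (for $|E|\leq\Gamma_M(r)$) is \emph{verbatim} the argument of Section \ref{tech}: set $r_S:=\inf\{t\in\mathbb{N}: |S|\leq\Gamma_M(t)\}$; if $r_A=r_{A\cap E}$ one is done; otherwise choose integers $t<t'$ with $r_{A\cap E}<t<t'<r_A$ and $\lfloor t'/t\rfloor\geq r_A/(2r_{A\cap E})$ (possible since $r_{A\cap E}\geq 1$ makes the gap between $r_{A\cap E}$ and $r_A$ at least $1$, though one may need $r_A\geq r_{A\cap E}+2$ --- if $r_A=r_{A\cap E}+1$ a direct estimate $|A\cap E|\leq|A|\leq Cr_A\nor{\partial_EA}\leq 2Cr_{A\cap E}\nor{\partial_EA}$ suffices), and conclude from $|A|>\Gamma_M(t')\geq\Gamma_M(\lfloor t'/t\rfloor t)\geq\lfloor t'/t\rfloor\Gamma_M(t)\geq (r_A/2r_{A\cap E})|A\cap E|$ together with $|A|\leq Cr_A\nor{\partial_EA}$. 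The plain weak isoperimetric inequality itself is the discrete isoperimetric statement to be proved in the Appendix; I would simply invoke it. The reduction of the $\ell^p$ inequality to the $\ell^1$ inequality then repeats Section \ref{Lp}: apply the $\ell^1$ result to $|f|^p$ with $\alpha'=p\alpha$, use the discrete Leibniz-type bound $|\nabla(|f|^p)|(m)\leq p\,|f|^{p-1}_{\max}(m)\,|\nabla f|(m)$ (where the factor comes from the mean value inequality $|a^p-b^p|\leq p\max(a,b)^{p-1}|a-b|$ applied edgewise), and then Hölder; one checks the resulting constant is uniform in $p$ and $\alpha$ exactly as before.

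For the finite case one copies Section \ref{compact-section}: normalize to $|V|=1$ via the renormalized counting measure $\mu=|\cdot|/|V|$, let $d_0$ be the graph diameter, define the median $m_0$ of $f$ as in \eqref{median}, and prove the discrete analogue of Lemma \ref{Faris-compact}: $\sum_E|f-m_0|\,\mu\leq Cr\nor{\nabla f}_{L^1(\mu)}$ for $\mu(E)\leq\Gamma_M(r)$ (here $\Gamma_M$ is also renormalized). This rests on the compact discrete weak isoperimetric inequality $\min\{\mu(E),\mu(E^c)\}\leq Cr\nor{\partial_E E}$ when $\min\{\mu(E),\mu(E^c)\}\leq\Gamma_M(r)/2$, again deferred to the Appendix, together with its decoupled form derived as above. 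The only subtlety is the discrete analogue of the bound $\Gamma_M(d_0/8)\leq 1/4$: from the compact-case super-additivity ($\Gamma_M(r+s)\geq\Gamma_M(r)+\Gamma_M(s)$ for $r+s\leq d_0/2$) one gets $\Gamma_M(\lfloor d_0/8\rfloor)\leq 1/4$ --- care is needed with floors when $d_0$ is small, but for $d_0$ bounded by an absolute constant the inequality \eqref{hpw-discrete} is anyway trivial (take $C$ large), so one may assume $d_0$ large. Then the reabsorption argument for $|m_0|\Gamma_M(r)$ (using Markov: $|m_0|\leq 2\sum_V|f|\,\mu$) goes through unchanged, giving the $\ell^1$ inequality; the Poincaré inequality $\sum_V|f|\,\mu\leq Cd_0\nor{\nabla f}_{L^1(\mu)}$ for zero-average $f$ follows as in Corollary \ref{poincare}; and the $\ell^p$ version is obtained by the same twist as in Section \ref{compact-section}.

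The main obstacle I anticipate is \emph{not} the conceptual structure --- which transfers mechanically --- but the bookkeeping at three junctures where the discreteness bites: (1) making the coarea formula precise for non-integer-valued or sign-changing finitely supported $f$, which is a matter of writing the layer-cake sum over the finite set of values attained by $|f|$; (2) the integer-part manipulations in the decoupled-isoperimetry reduction, where the smooth argument exploited that $t,t'$ range over a continuum and one must now confirm a suitable integer pair exists, handling the small-radius edge cases separately; and (3) the open-versus-closed ball conventions in the super-additivity proof, since $B(m,r)$ is defined with strict inequality $d<r$ and a geodesic of integer length interacts delicately with that. None of these is deep, but each requires a careful sentence or two, and I would flag in the write-up that whenever the diameter $d_0$ (finite case) or a relevant radius is bounded by an absolute constant, the desired inequality holds trivially by choosing the absolute constant $C$ large, which disposes of all the degenerate cases at once.
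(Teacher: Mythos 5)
Your proposal is correct and follows essentially the same route as the paper, which proves Theorem \ref{HPW-discrete} precisely by translating the smooth-setting argument term by term (discrete coarea formula, super-additivity of $\Gamma_M$, the decoupled weak isoperimetric inequality from the Appendix, the discrete Leibniz rule for the $\ell^p$ reduction, and the median/Poincar\'e argument in the finite case). The only cosmetic difference is that the paper obtains the coarea formula directly for arbitrary real-valued $f$ from the identity $\int_0^{+\infty}|1_{\{|f|>s\}}(m)-1_{\{|f|>s\}}(n)|\,ds=\big||f(m)|-|f(n)|\big|$ and runs the decoupling argument with real radii $t,t'$ (so no integer-part edge cases arise), whereas you introduce integer-valued approximations and handle those cases separately; both work.
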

We make a few comments specific to the discrete setting.
\begin{enumerate}[(i)]

\item The main qualitative information that we obtain from this result is that, if $\nor{f}_{\ell^p(V)} = 1$ and the $\ell^p$-energy $\nor{ \nabla f }_{\ell^p(V)}$ is small, then $f$ cannot be very concentrated. In fact H\"older inequality gives $\nor{\nabla f}_{\ell^p(V)}\leq 2\delta\nor{f}_{\ell^p(V)}$, and hence the $\ell^p$-energy cannot be large,  if $f$ is normalized in $\ell^p(V)$. 

\item Some assumption $w \ge w_0 >0$ is necessary, as one can see by testing \eqref{hpw-discrete} with a Dirac delta.

\item An application of Cauchy-Schwarz inequality gives
\[ \nor{\nabla f}_{\ell^2(V)}^2 \le 2\delta \sum_{m\sim n}|f(m)-f(n)|^2,\]
so, when $p=2$, we may deduce from this an inequality with the Dirichlet energy in place of the gradient.

\end{enumerate}

\section{Proof of Theorem \ref{HPW-discrete}}\label{proof-discrete} 
The global structure of the proof of Theorem \ref{HPW-discrete} is the same as the one of the proof of Theorem \ref{HPW}, but the local details need a routine translation, for which the dictionary below may help:\newline

\begin{center}
\begin{tabular}{|c| c|}
\hline
Smooth & Discrete\\
\hline
$C^\infty_c(M)$ & finitely supported on $V$\\
$\mu$ & counting measure $|\cdot|$  \\
$A\subseteq M$ Borel & $A\subseteq V$\\
$\int_A$ & $\sum_{m\in A}$\\
$L^p(M)$ & $\ell^p(V)$\\
$\nabla_Hf$ & $|\nabla f|$\\
$d_{CC}$ & $d$\\ 
radii $r\geq0$ & radii $r\geq 1$\\
$\nor{\partial_HA}$ & $\nor{\partial_EA}$\\
$w:M\rightarrow[0,+\infty]$ Borel & $w:V\rightarrow[1,+\infty]$ \\
\hline
\end{tabular}
\end{center}
For example, Theorem \ref{HPW-discrete} is nothing but the translation of Theorem \ref{HPW} according to our dictionary. For the sake of clarity, we explicit a few observations.\begin{enumerate}[(i)]
\item Coarea formula \eqref{co} in Section \ref{isoperimetric-section} is formally the same, but its proof is much easier and follows from the identity\be
\int_0^{+\infty}|1_{\{|f|>s\}}(m)-1_{\{|f|>s\}}(n)|ds=\left||f(m)|-|f(n)|\right| \qquad\forall m,n\in V,
\ee summing over adjacent $m,n\in V$.
\item The discrete weak isoperimetric theorems, i.e. the translation of Theorem \ref{weak-isoperimetric} and Theorem \ref{weak-isoperimetric-compact}, may be proved translating the content of the Appendix. 

\item The argument in Section \ref{Lp} can be adapted in a straightforward way using a discrete Leibniz rule, whose proof we sketch here for the reader's convenience. In the inequality
\[ \abs{ x\abs{x}^{p-1} - y\abs{y}^{p-1} } \le p \abs{ x - y } \bra{ \abs{x}^{p-1} + \abs{y}^{p-1}}, \]
which holds for every $x$, $y \in \cnum$, we let $x=f(m)$, $y= f(n)$ and sum on $m$, $n \in V$ with $m\sim n$. We obtain
\[ || \nabla ( f \abs{f}^{p-1}) ||_{\ell^1\bra{V}} \le p \sum_{m\sim n} \abs{ f(m) - f(n) }\! \bra{ \abs{f(m)}^{p-1} + \abs{f(n)}^{p-1}} \comma \]
which entails the estimate
\[ \nor{ \nabla |f|^p }_{\ell^1\bra{V}}  \le|| \nabla ( f \abs{f}^{p-1}) ||_{\ell^1\bra{V}}\le  2p \nor{ |f|^{p-1} \abs{\nabla f}} _{\ell^1\bra{V}} \period\]
\end{enumerate}

\section{Constants in uncertainty and isoperimetric inequalities}\label{question}

In this section we briefly comment on the quantitative aspect of the connection between isoperimetric and uncertainty inequalities. Given a smooth or discrete homogeneous space $M$ satisfying our standing assumptions, we define $C_M$ as the smallest positive number such that 
\be
\mu(E)\leq C_Mr\nor{\partial E}\qquad\forall E\subseteq M: \mu(E)\leq \Gamma_M(r),\quad\forall r,
\ee where $\nor{\partial E}$ is the appropriate perimeter and $\mu$ the invariant measure. Theorem \ref{weak-isoperimetric} implies that $C_M$ is bounded independently of $M$. As already remarked, $C_{\R^N}=N^{-1}$. 

We next define $D_M$ as the smallest positive number such that \be \nor{f}_{L^2(M)}^2\leq D_M\nor{\nabla f}_{L^2(M)} \nor{d\cdot f}_{L^2(M)},\ee where $d$ is the appropriate $G$-invariant distance. For the sake of simplicity we do not consider more general $w$'s, or different values of $p$, but one can easily extend what follows to cover these more general cases. By \eqref{heisen}, we know that $D_{\R^N}=4N^{-2}$. Keeping track of the constants in our arguments, one can easily see that in general $D_M\leq C \cdot C_M^2$. Bounding $C_M$ could be an easier task than actually solving the isoperimetric problem on $M$, i.e.\ describing the sets with minimal perimeter among those of a given measure.

\appendix

\section{Proofs of Theorem \ref{weak-isoperimetric} and Theorem \ref{weak-isoperimetric-compact}}\label{isoperimetric-proof-section}
In this appendix we assume that $M$ satisfies the assumptions of Section \ref{statement-smooth}. In particular, $M$ is not assumed to be compact, unless otherwise specified.
Before starting, we fix a Haar measure $\nu$ on $G$ and observe that if $m\in M$ and \bee
\pi_m: G&\longrightarrow& M\\
x&\longmapsto&\phi_x(m):=x\cdot m,\eee then $\mu_m(E):=\nu(\pi_m^\leftarrow(E))$ ($E\subseteq M$ Borel) defines a $G$-invariant measure which is finite on compact sets, since isotropy subgroups are assumed to be compact. Unimodularity of $G$ easily implies that $\mu_m$ does not depend on $m$ and we accordingly suppress the subscript $m$ from the notation. Notice that the $G$-invariant measure on $M$ is unique up to multiplicative constants (see Theorem $2.49$ of \cite{fo-abstract}), so there is no loss of generality in working with this measure $\mu$ .

\subsection{}\label{double-counting} \emph{If $A,B\subseteq G$ are Borel subsets such that $\nu(A)\leq \nu(B)/2<+\infty$, then} \be\frac{1}{\nu(B)}\int_B\nu(Ab\setminus A)d\nu(b)\geq  \frac{\nu(A)}{2}.\ee

Notice that this statement has a probabilistic interpretation: if $B$ is significantly larger than $A$, the right-translates of $A$ by a random element of $B$ are on average significantly disjoint from $A$. Its proof is essentially combinatorial, being a continuous double-counting argument. We proceed with the details.

Consider the following subset of $G\times G$:\be
C:=\{(a,b)\in A\times B: ab\notin A\}.
\ee If $m:G\times G\rightarrow G$ denotes group multiplication, then $C=m^\leftarrow(G\setminus A)\cap (A\times B)$, and hence $C$ is a Borel subset of $G\times G$ with respect to the product topology. Recall that the Borel $\sigma$-algebra of $G\times G$ equals the product of the Borel $\sigma$-algebras of the two factors, since $G$ is second countable. If we endow $G\times G$ with the product measure $\nu\times \nu$, we can then apply Fubini Theorem, obtaining\be
\int_A \nu(b\in B: ab\notin A)d\nu(a)=\int_B \nu(a\in A: ab\notin A)d\nu(b),
\ee which can be rewritten as follows\be
\int_A \nu(B\setminus a^{-1}A)d\nu(a)=\int_B \nu(A\setminus Ab^{-1})d\nu(b).
\ee Now the assumptions on the measures $A$ and $B$ and left invariance give \be\nu(B\setminus a^{-1}A)\geq \nu(B)-\nu(A)\geq \frac{\nu(B)}{2}.\ee This, combined with the equality above, gives\be
\frac{1}{\nu(B)}\int_B \nu(Ab\setminus A)d\nu(b)=\frac{1}{\nu(B)}\int_B \nu(A\setminus Ab^{-1})d\nu(b)\geq \frac{\nu(A)}{2},
\ee where we used the right invariance of $\nu$ in the first equality.
\newline
\par We now fix $0\in M$ and if $f$ is a function on $M$ and $E\subseteq M$, we let $\widetilde{f}:=f\circ\pi_0$ and $\widetilde{E}:=\pi_0^\leftarrow(E)$. From now on, every set is assumed to be Borel.

\subsection{}\label{grad-dist} \emph{If $f\in C^\infty_c(M)$ and $x\in G$, it holds}
\bel\label{lifting} \int_G \abs{ \widetilde{f}(yx) - \widetilde{f}(y) }d\nu(y) \le d_{CC}(0,x\cdot 0)\int_M|\nabla_Hf(m)|d\mu(m).\eel

Let $\gamma: [0,T] \to M$ be a piecewise $C^1$ horizontal curve of length $L$ connecting $0$ to $x\cdot 0$. Then it holds for every $y\in G$, by the Fundamental Theorem of Calculus, the $G$-invariance of $\nabla_H$ and Cauchy-Schwarz,
\bee |\widetilde{f} (yx) - \widetilde{f}(y)|  &=& |f\circ\phi_y(\gamma(T))-f\circ\phi_y(\gamma(0))|\\
\abs{\int _0^T  g(\nabla_H(f\circ\phi_y)(\gamma(t)) , \dot{\gamma}(t)) dt} 
&\le& \int _0^T |\nabla_H f |(y\cdot \gamma(t))\cdot|\dot{\gamma}(t)| dt\period \eee
Integrating in $y$ with respect to $\nu$ and using Fubini, we obtain
\bee \int_G |\widetilde{f}(yx)- \widetilde{f}(y)|d\nu(y) &\leq& \int_0^T|\dot{\gamma}(t)| \left(\int_G |\nabla_H  f(\pi_{\gamma(t)}(y))| d\nu(y)\right)dt\\
&=&L\int_M |\nabla_H  f(m)| d\mu(m),
\eee where the last identity follows from our observation that $\mu$ is the push-forward of $\nu$ with respect to $\pi_m$ for any $m$.
Taking the $\inf$ over all the horizontal curves connecting $0$ to $x\cdot 0$, we find the thesis. Estimate \eqref{lifting} also appears in Section $6$ of \cite{pseudo}, where the author deals with pseudo-Poincar\'e and Sobolev inequalities.

\subsection{}\label{grad-dist-set}\emph{For any Borel set $E\subseteq M$ of finite $\mu$-measure and $x\in G$,}\be
\nu(\widetilde{E}x\triangle \widetilde{E})\leq d_{CC}(0,x\cdot 0)\nor{\partial_H E}.
\ee 

Here $\triangle$ denotes the symmetric difference of sets. Notice that this is the version for sets of the previous step.

 Take any sequence $f_n\in C^\infty_c(M)$ converging in $L^1(M)$ to $1_E$. Now \ref{grad-dist} gives the inequalities\be
\int_G \abs{ \widetilde{f}_n(y\cdot x^{-1}) - \widetilde{f}_n(y) }d\nu(y) \le d_{CC}(0,x\cdot 0)\int_M|\nabla_Hf_n(m)|d\mu(m),
\ee where we used $d_{CC}(0,x\cdot 0)=d_{CC}(0,x^{-1}\cdot 0)$, a consequence of $G$-invariance of the distance. Taking the $\liminf$ of both sides, we find\be
\nu(\widetilde{E}x\triangle\widetilde{E})=\int_G \abs{ 1_{\widetilde{E} x}- 1_{\widetilde{E}}}d\nu\le d_{CC}(0,x\cdot 0)\liminf_{n\rightarrow+\infty}\int_M|\nabla_Hf_n(m)|d\mu(m).
\ee Taking the $\inf$ with respect to the choice of the sequence $f_n$, we find what we wanted.

\subsection{}\label{almost}\emph{If $\mu(E)\leq\Gamma_M(r)/2$, then}\be
\mu(E)\leq r\nor{\partial_HE}.
\ee
Let $x\in B:=\pi_0^\leftarrow(B(0,r))$: by right-invariance of $\nu$ and \ref{grad-dist-set} we have\be
\nu(\widetilde{E}x\setminus \widetilde{E})+\nu(\widetilde{E}x^{-1}\setminus \widetilde{E})=\nu(\widetilde{E}x\setminus \widetilde{E})+\nu(\widetilde{E}\setminus \widetilde{E}x)=\nu(\widetilde{E}x\triangle \widetilde{E})\leq r\nor{\partial_H E}.
\ee Since $\nu(\widetilde{E})=\mu(E)\leq \mu(B(0,r))/2= \nu(B)/2$, we can average the left hand side with respect to $x\in B$ and conclude by an application of \ref{double-counting}. 

What we said until now gives a proof of Theorem \ref{weak-isoperimetric-compact}.

\subsection{}\label{weaker}\emph{Theorem \ref{weak-isoperimetric} holds, i.e.\ if $M$ is non-compact, $E\subseteq M$ and $\mu(E)\leq\Gamma_M(r)$, then}
\be
\mu(E)\leq 2r\nor{\partial_H E}.
\ee
By super-additivity of $\Gamma_M$ (see Section \ref{tech}), $\Gamma_M(r)\leq \Gamma_M(2r)/2$. We can then apply \ref{almost} and conclude.

\section*{Acknowledgements}

The authors would like to express their gratitude to F. Ricci, who introduced them to the subject of uncertainty inequalities and read very carefully a draft of the present paper.

We also thank L. Ambrosio, for his interesting comments on the isoperimetric side of these matters, and S. Di Marino, who pointed to us the work of M. Miranda.

\bibliographystyle{amsalpha}
\bibliography{biblio-uncertainty}

\providecommand{\bysame}{\leavevmode\hbox to3em{\hrulefill}\thinspace}
\providecommand{\MR}{\relax\ifhmode\unskip\space\fi MR }
\providecommand{\MRhref}[2]{%
  \href{http://www.ams.org/mathscinet-getitem?mr=#1}{#2}
}
\providecommand{\href}[2]{#2}
\begin{thebibliography}{OSCT08}

\bibitem[CCR13]{hardy-unc-strat}
P.~{Ciatti}, M.~G. {Cowling}, and F.~{Ricci}, \emph{{Hardy and uncertainty
  inequalities on stratified Lie groups}}, ArXiv e-prints (2013).

\bibitem[CRS07]{cia-ri-sun2}
Paolo Ciatti, Fulvio Ricci, and Maddala Sundari,
  \emph{Heisenberg-{P}auli-{W}eyl uncertainty inequalities and polynomial
  volume growth}, Adv. Math. \textbf{215} (2007), no.~2, 616--625. \MR{2355602
  (2008i:43010)}

\bibitem[CSC93]{coulhon-saloffcoste}
Thierry Coulhon and Laurent Saloff-Coste, \emph{Isop{\'e}rim{\'e}trie pour les
  groupes et les vari{\'e}t{\'e}s}, Rev. Mat. Iberoamericana \textbf{9} (1993),
  no.~2, 293--314. \MR{1232845 (94g:58263)}

\bibitem[DT10]{invitation}
John~P. D'Angelo and Jeremy~T. Tyson, \emph{An invitation to {C}auchy-{R}iemann
  and sub-{R}iemannian geometries}, Notices Amer. Math. Soc. \textbf{57}
  (2010), no.~2, 208--219. \MR{2604488 (2011b:32055)}

\bibitem[Fed69]{federer}
Herbert Federer, \emph{Geometric measure theory}, Die Grundlehren der
  mathematischen Wissenschaften, Band 153, Springer-Verlag New York Inc., New
  York, 1969. \MR{0257325 (41 \#1976)}

\bibitem[Fol95]{fo-abstract}
Gerald~B. Folland, \emph{A course in abstract harmonic analysis}, Studies in
  Advanced Mathematics, CRC Press, Boca Raton, FL, 1995. \MR{1397028
  (98c:43001)}

\bibitem[FS97]{fo-si}
Gerald~B. Folland and Alladi Sitaram, \emph{The uncertainty principle: a
  mathematical survey}, J. Fourier Anal. Appl. \textbf{3} (1997), no.~3,
  207--238. \MR{1448337 (98f:42006)}

\bibitem[Gro99]{gro-green}
Misha Gromov, \emph{Metric structures for {R}iemannian and non-{R}iemannian
  spaces}, Progress in Mathematics, vol. 152, Birkh{\"a}user Boston Inc.,
  Boston, MA, 1999, Based on the 1981 French original [ MR0682063 (85e:53051)],
  With appendices by M. Katz, P. Pansu and S. Semmes, Translated from the
  French by Sean Michael Bates. \MR{1699320 (2000d:53065)}

\bibitem[Mar10]{martini}
Alessio Martini, \emph{Generalized uncertainty inequalities}, Math. Z.
  \textbf{265} (2010), no.~4, 831--848. \MR{2652538 (2011m:43015)}

\bibitem[Mir03]{mi}
Michele Miranda, Jr., \emph{Functions of bounded variation on ``good'' metric
  spaces}, J. Math. Pures Appl. (9) \textbf{82} (2003), no.~8, 975--1004.
  \MR{2005202 (2004k:46038)}

\bibitem[Mon02]{montgomery}
Richard Montgomery, \emph{A tour of subriemannian geometries, their geodesics
  and applications}, Mathematical Surveys and Monographs, vol.~91, American
  Mathematical Society, Providence, RI, 2002. \MR{1867362 (2002m:53045)}

\bibitem[OSCT08]{sa-ok-tep}
Kasso~A. Okoudjou, Laurent Saloff-Coste, and Alexander Teplyaev, \emph{Weak
  uncertainty principle for fractals, graphs and metric measure spaces}, Trans.
  Amer. Math. Soc. \textbf{360} (2008), no.~7, 3857--3873. \MR{2386249
  (2008k:42121)}

\bibitem[Ric05]{ricci-polynomial}
Fulvio Ricci, \emph{Uncertainty inequalities on spaces with polynomial volume
  growth}, Rend. Accad. Naz. Sci. XL Mem. Mat. Appl. (5) \textbf{29} (2005),
  no.~1, 327--337. \MR{2305079 (2008b:42029)}

\bibitem[SC10]{pseudo}
Laurent Saloff-Coste, \emph{Pseudo-{P}oincar{\'e} inequalities and applications
  to {S}obolev inequalities}, Around the research of {V}ladimir {M}az'ya. {I},
  Int. Math. Ser. (N. Y.), vol.~11, Springer, New York, 2010, pp.~349--372.
  \MR{2723827 (2011h:46049)}

\bibitem[VSCC92]{va-sa-cou}
N.~Th. Varopoulos, L.~Saloff-Coste, and T.~Coulhon, \emph{Analysis and geometry
  on groups}, Cambridge Tracts in Mathematics, vol. 100, Cambridge University
  Press, Cambridge, 1992. \MR{1218884 (95f:43008)}

\end{thebibliography}

\end{document}